\documentclass[11pt,reqno]{amsart}

\usepackage[margin=1.1in]{geometry}
\usepackage{amsmath,amssymb,amsthm}
\usepackage{enumitem}
\usepackage[colorlinks=true,linkcolor=blue,citecolor=blue,urlcolor=blue]{hyperref}

\theoremstyle{plain}
\newtheorem{theorem}{Theorem}[section]
\newtheorem{proposition}[theorem]{Proposition}
\newtheorem{lemma}[theorem]{Lemma}
\newtheorem{corollary}[theorem]{Corollary}
\newtheorem{introthm}{Theorem}

\theoremstyle{definition}
\newtheorem{definition}[theorem]{Definition}
\newtheorem{example}[theorem]{Example}
\newtheorem{problem}[theorem]{Problem}

\theoremstyle{remark}
\newtheorem{remark}[theorem]{Remark}

\newcommand{\Fp}{\mathbb{F}_p}
\newcommand{\Z}{\mathbb{Z}}
\newcommand{\Aut}{\operatorname{Aut}}
\newcommand{\Out}{\operatorname{Out}}
\newcommand{\UAut}{\operatorname{UAut}}
\newcommand{\Hom}{\operatorname{Hom}}
\newcommand{\End}{\operatorname{End}}
\newcommand{\Der}{\operatorname{Der}}
\newcommand{\gr}{\operatorname{gr}}
\newcommand{\im}{\operatorname{im}}
\newcommand{\ad}{\operatorname{ad}}
\newcommand{\I}{\mathrm{I}}
\newcommand{\sh}{\psi}
\newcommand{\Heis}{\operatorname{Heis}}

\newcommand{\fl}{\mathfrak l}
\newcommand{\ff}{\mathfrak f}
\newcommand{\fr}{\mathfrak r}
\newcommand{\fe}{\mathfrak e}
\newcommand{\fv}{\mathfrak v}
\newcommand{\fu}{\mathfrak u}
\newcommand{\Om}{\Omega}

\title[Unipotent automorphisms and the Lazard Lie algebra]{Unipotent automorphisms of modular group algebras and the homology of the Lazard Lie algebra}
\author{Diego Tuzzolo}
\address{S\~ao Paulo, SP, Brazil}
\email{diegotuzzolo@usp.br}
\date{September 9, 2026}

\subjclass[2020]{Primary 20C05; Secondary 16S34, 17B56, 20D15, 20F40}
\keywords{Modular isomorphism problem, group algebras, small group rings, central Frattini extensions, Jennings series, Lazard correspondence, Lie algebra homology}

\begin{document}

\begin{abstract}
Let $p$ be an odd prime and $H$ a finite $p$-group. Hertweck and Soriano
parametrize the central Frattini extensions of $H$ by subspaces of a relation
module $V$, matching isomorphism classes of extensions with orbits of
$\Out(H)$ and isomorphism classes of the associated small group rings with
orbits of $\Out(\Fp H)$. The discrepancy between the two is measured by the
group $U(H)\le GL(V)$ of displacements induced by the normalized unipotent
automorphisms of $\Fp H$; it is the mechanism behind the counterexamples to
the modular isomorphism problem in characteristic $2$. We prove that for $H$
of exponent $p$ and class at most $p-2$ every such automorphism acts on $V$
exactly as an automorphism of $H$ does, and that
$U(H)=\exp\Om\bigl(\Der_{>0}(\fl(H))\bigr)$, where $\fl(H)$ is the Lazard Lie
algebra of $H$, $\Der_{>0}$ its derivations raising the lower central
filtration, and $\Om$ the natural action of derivations on $H_2(\fl(H))$.
Hence $U(H)=1$ precisely when every derivation of positive degree acts
trivially on $H_2(\fl(H))$. We compute $U(H)$ for all groups of class $2$ and
exponent $p$, obtaining $p^{2n(n-1)}$ for the Heisenberg groups
$\Heis(p^n)$.
\end{abstract}

\maketitle

\section{Introduction}\label{sec:intro}

Does the multiplication table of a finite group survive when it is
linearized? Concretely: if $G$ and $\widetilde G$ are finite $p$-groups whose
group algebras over the field of $p$ elements are isomorphic as algebras,
must $G$ and $\widetilde G$ be isomorphic? This is the \emph{modular
isomorphism problem}, open since the 1950s. The algebra $\Fp G$ remembers a
great deal --- its dimension is $|G|$, and its radical filtration recovers a
canonical central series of $G$ --- but it forgets the group basis, and the
question is whether what it remembers is enough.

For half a century the evidence pointed one way: the answer is positive for
abelian groups \cite{Des56}, for groups of order at most $p^5$
\cite{Pas65,SS96}, for metacyclic groups \cite{Bag88,San96}, for groups of
class $2$ with elementary abelian derived subgroup \cite{San89}, and in
several further classes surveyed in \cite{Mar22}. Then Garc\'ia-Lucas,
Margolis and del R\'io \cite{GLMdR22} produced two non-isomorphic groups of
order $2^9$ with isomorphic modular group algebras. Their counterexample is
specific to characteristic $2$, and for odd primes the problem remains open,
with the known invariants behaving markedly better \cite{GdRS23,GLdR25}.
Understanding \emph{why} the two characteristics differ is therefore not a
technicality but the central question in the area, and it is the question
this paper answers for one precisely delimited mechanism.

That mechanism is the construction through which the counterexamples were
found. Hertweck and Soriano \cite{HS07} fix a $p$-group $H$ and study all
central extensions $1\to A\to G\to H\to1$ with $A$ inside the Frattini
subgroup of $G$: these are the \emph{central Frattini extensions} of the
\emph{base} $H$, and they are all quotients of a single universal one,
$1\to V\to E\to H\to1$, so that each is recorded by a subspace of the
\emph{relation module} $V$. Two extensions are isomorphic as groups exactly
when the corresponding subspaces lie in one orbit of $\Aut(H)$. Now replace
the group by an algebra. To each extension Hertweck and Soriano attach a
small quotient of $\Fp G$, the \emph{small group ring}
$s(G,A)=\Fp G/\I(\Fp A)\I(\Fp G)$, and prove that two extensions have
isomorphic small group rings exactly when the corresponding subspaces lie in
one orbit of the automorphism group of the \emph{algebra} $\Fp H$. Since an
algebra has more automorphisms than a group, the second orbit relation is
coarser, and any genuine coarsening is a pair of non-isomorphic groups with
isomorphic small group rings --- the first step towards a counterexample. This
is exactly what happens for $H=D_8$ \cite[Ex.~6.3]{HS07}.

The coarsening is measured by a single group. An automorphism of $\Fp H$ is
\emph{normalized unipotent} if it induces the identity on
$\I/\I^2\cong H/\Phi(H)$; these form a group $\UAut(\Fp H)$, and we write
$U(H)$ for its image in $GL(V)$. Group automorphisms contribute to $U(H)$ as
well --- those that are trivial modulo the Frattini subgroup --- and what one
wants to know is whether they contribute everything:
\begin{quote}
\emph{if every element of $U(H)$ is realized by an automorphism of the group
$H$, then two central Frattini extensions of $H$ with isomorphic small group
rings are isomorphic groups.}
\end{quote}
Hertweck and Soriano computed $U(H)$ for elementary abelian bases and for
quotients of free groups by terms of the Jennings series; beyond that the
group has been computed case by case, by hand, through the flag structure of
$V$. Our purpose is to show that in odd characteristic, and below the degree
where the prime interferes, $U(H)$ is not a group-ring invariant at all: it is
an invariant of a nilpotent Lie algebra, and it is computed by graded linear
algebra.

\subsection*{The main theorem}
Let $p$ be odd and let $H$ have exponent $p$ and nilpotency class $c<p$. The
Lazard correspondence attaches to $H$ a nilpotent Lie algebra $\fl=\fl(H)$
over $\Fp$ of the same dimension and class, with $\Aut(H)=\Aut(\fl)$.
Present it as $\fl=\ff/\fr$ with $\ff$ free nilpotent on $d=d(H)$
generators; by Hopf's formula the second homology is
$H_2(\fl)=\fr/[\ff,\fr]$. A derivation $\mu$ of $\fl$ lifts to $\ff$,
preserves $\fr$ and $[\ff,\fr]$, and so induces an endomorphism $\Om(\mu)$
of $H_2(\fl)$ which does not depend on the lift; write $\Der_{>0}(\fl)$ for
the derivations $\mu$ with $\mu(\fl)\subseteq\gamma_2(\fl)$, which are
nilpotent, so that $\exp(\mu)$ is an automorphism of $\fl$.

\begin{introthm}\label{thmA}
Let $p$ be odd and $H$ a finite $p$-group of exponent $p$ and class
$c\le p-2$. Then:
\begin{enumerate}[label=\textup{(\arabic*)}]
\item every $\alpha\in\UAut(\Fp H)$ acts on $V$ exactly as the group
automorphism $\exp(\mu)$ does, for some $\mu\in\Der_{>0}(\fl(H))$;
\item the commutator sector $C(V)=V\cap[E,E]$ is canonically isomorphic to
$H_2(\fl(H))$, the group $U(H)$ acts faithfully on it, and
\[
U(H)=\exp\Bigl(\Om\bigl(\Der_{>0}(\fl(H))\bigr)\Bigr)\ \le\
GL\bigl(H_2(\fl(H))\bigr),\qquad
|U(H)|=p^{\,\dim_{\Fp}\Om(\Der_{>0}(\fl(H)))};
\]
\item $U(H)=1$ if and only if every derivation of positive degree of
$\fl(H)$ acts trivially on $H_2(\fl(H))$;
\item if $\fl(H)$ is graded, then non-isomorphic central Frattini extensions
of $H$ have non-isomorphic small group rings.
\end{enumerate}
\end{introthm}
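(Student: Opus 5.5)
The plan is to reduce everything to the Hertweck--Soriano description of $\UAut(\Fp H)$ acting on $V$, and then to translate that description through the Lazard correspondence, which is available because every group in sight has class $<p$. The universal central Frattini extension $E$ has class at most $c+1\le p-1$ and exponent dividing $p^2$. Its relation module $V=R/[F,R]R^p$ (with $H=F/R$, $F$ free) splits into the commutator sector $C(V)=V\cap[E,E]$ and a complementary $p$-power part.

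\textbf{Step 1.} I first identify $C(V)$ with $H_2(\fl(H))$. The route is to pass to the quotient of $F$ of class $c+1$ and exponent $p$, which is free in the Lazard variety. There Lazard gives $F\leftrightarrow\ff$ and $R\leftrightarrow\fr$, and it turns group commutators $[F,R]$ into Lie brackets $[\ff,\fr]$ modulo higher terms that already lie in $[\ff,\fr]$. The hypothesis $c+1\le p-1$ ensures that the Baker--Campbell--Hausdorff series, truncated in degree $\le p-1$, is valid, so $(R\cap\gamma_2F)/[F,R]$ corresponds to $\fr/[\ff,\fr]$. This is Hopf's formula on both sides. Since $H$ has exponent $p$, the $p$-power sector of $V$ is spanned by $x^p$ for $x$ ranging over a basis of $H/\Phi(H)$, and it is governed linearly by the first layer.

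\textbf{Step 2, which is part (1) and the main obstacle.} I take $\alpha\in\UAut(\Fp H)$ and consider its lift to the small group ring $s(E,V)$; the action on $V$ is read off through the Hertweck--Soriano identification $V\cong \I(\Fp V)\I(\Fp E)/\dots$. I would write $\alpha$ through its effect on generators, $\alpha(x_i)=x_i(1+\delta_i)$ with $\delta_i\in \I^2$. Working modulo the ideal that acts trivially on $V$, I would show that $\delta_i$ can be replaced by a Lie element of $\gr\,\Fp H$ of degree $\ge2$.

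\begin{itemize}
\item Here I use the Jennings/Quillen theorem that $\gr\,\Fp H$ is the restricted enveloping algebra of the Jennings Lie algebra. Below degree $p$, the Jennings Lie algebra agrees with $\gr\fl$.
\item Only truncations of degree $\le c+1<p$ matter, so the truncated logarithm $\log(1+\delta_i)$ makes sense. The obstruction is that its non-Lie (symmetric) components do not act on $V$ at all.
\item This gives a derivation $\mu$ of $\fl$ with $\mu(x_i)\in\gamma_2$. One must then check that $\exp(\mu)$ and $\alpha$ agree on $V$, which I would do by comparing both actions on the generators of $C(V)$ given by commutators and $p$-th powers.
\end{itemize}

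The delicate points are that one needs $c\le p-2$, not $p-1$, so that $E$ itself stays within Lazard range, and that the non-graded filtration must be handled by induction on degree.

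\textbf{Step 3, giving parts (2)--(4).} Conversely, every $\exp(\mu)$ with $\mu\in\Der_{>0}$ is a group automorphism trivial on $H/\Phi(H)$, so it lies in the image. This gives $U(H)=\{\exp\Om(\mu)\}$. Several facts then finish part (2).

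\begin{itemize}
\item On the $p$-power sector, $\exp(\mu)$ acts trivially, because $\mu$ raises degree and $x\mapsto x^p$ is additive modulo $p$-th powers of commutators. Combined with the faithfulness coming from the extension structure, the action on $C(V)$ determines the element.
\item Because $\Om(\Der_{>0})$ is a nilpotent Lie subalgebra of $\End H_2$, of nilpotency degree $<p$, truncated BCH makes $\exp$ a bijection onto a group. That group has order $p^{\dim}$.
\item I must also check that $\Om$ commutes with $\exp$. This holds because lifts to $\ff$ exponentiate compatibly.
\end{itemize}

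Part (3) is then immediate. For part (4): if $\fl$ is graded, the grading derivation $\theta$ (multiplication by the degree) is a derivation commuting with everything in the relevant sense. Using it, every positive-degree derivation acting on $H_2$ can be realized by a genuine automorphism of $H$, namely $\exp(\mu)$. Hence the $\Out(\Fp H)$-orbits on subspaces coincide with the $\Out(H)$-orbits, and the Hertweck--Soriano criterion gives part (4).
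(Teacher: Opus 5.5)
Your Steps 1 and 3 follow the paper: Lazard plus Hopf give $C(V)\cong H_2(\fl)$, and $\exp$ is injective on the nilpotent algebra $\Om(\Der_{>0}(\fl))$. Step 2, however, is the theorem, and it has no working mechanism. Taking the logarithm of the \emph{elements} $1+\delta_i=\hat x_i^{-1}\hat\alpha(\hat x_i)$ and keeping a ``Lie part'' does not give the right derivation. Already when $\alpha$ is the group automorphism $\exp(\mu_0)$, with $X_i=\log\hat x_i$ and a lift $\tilde\mu_0$, one gets $\log(1+\delta_i)=\mathrm{BCH}(-X_i,e^{\tilde\mu_0}X_i)=\tilde\mu_0X_i+\tfrac12\tilde\mu_0^2X_i-\tfrac12[X_i,\tilde\mu_0X_i]+\cdots$. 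This is entirely Lie, yet it is not $\tilde\mu_0X_i$. For general $\alpha$ you give no reason why the assignment $x_i\mapsto(\text{Lie part})$, extended to a derivation of $\ff$, maps $\fr$ into $\fr$, nor why its exponential matches $\alpha$ on $V$. Your claim that the symmetric components ``do not act on $V$'' is exactly what must be proved. Passing to $\gr\Fp H$ does not help: the action on $V$ lives in $s(E,V)$, and for non-graded $\fl$ a graded derivation of $\gr\fl$ need not come from one of $\fl$, so ``induction on degree'' hides the real difficulty.

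The missing idea is to take the logarithm of the \emph{operator}. On $s'=s(E,V)/\I(s)^p$, $D=\log\hat\alpha$ is an algebra derivation (Lemma~\ref{lem:log}, which needs a truncation argument in characteristic $p$). Moreover $s'=\Fp\oplus\fe\oplus C$ with $C$ stable under $\ad(\fe)$ (Lemma~\ref{lem:symm}). This decomposition rests on $(v-1)^2=0$ in the small group ring, so that $\log v=v-1$ and $\fv=\{V^\circ-1\}$ lies inside $\fe$. It also rests on centrality of $V$, so that $J_B=\fv\cdot\I(B)$ sits in the symmetric summands $j\ge2$. Then $\tilde\mu=p_1\circ D|_{\fe}$ is automatically a degree-raising derivation of $\fe$. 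It agrees with $D$ on $\fv$ because $D$ preserves $\fv$, so it descends to $\fl$ with no relation-checking, and $\hat\alpha|_{\fv}=\exp(D|_{\fv})=\exp(\Om(\mu))$.

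There are two further gaps. First, $s'$ only sees $V/\mho_1(E)$, since $\mho_1(E)=W_p$ dies modulo $\I(s)^p$. To get agreement on all of $V$ you need two facts about an arbitrary $\alpha\in\UAut(\Fp H)$, not just about $\exp(\mu)$ (which is what you check). One is that $\alpha$ is trivial on $\mho_1(E)$; this follows from the depth flag, $W_{p+1}=1$ (Lemma~\ref{lem:reform}(1)). The other, more seriously, is that $v^\alpha v^{-1}\in C(V)$ for every $v\in V$. Without it, $U(H)$ need not even preserve $C(V)$, and $\rho(\alpha)\rho(\exp\mu)^{-1}$ could be a nontrivial transvection $v\mapsto v\,\phi(v)$ with $\phi$ taking values in $\mho_1(E)$. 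The paper proves this (Proposition~\ref{prop:absorb}) by a Frobenius argument in $\bar E'=E/[E,E]\cong(C_{p^2})^d$. There the displacement maps to $\prod_i(1+\sigma_i^{\,p})^{f_i}$ with $\sigma_i\in\I^2$, hence into $D_{2p}(\bar E')=1$. Your appeal to ``faithfulness coming from the extension structure'' does not supply this.

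Second, your argument for (4) targets the wrong issue. By (1)--(2), unipotent displacements are realized by group automorphisms whether or not $\fl$ is graded, and the grading derivation plays no role. The grading is needed for the Levi factorization $\Aut(\Fp H)=\Aut(H)\cdot\UAut(\Fp H)$. Without it, a non-unipotent algebra automorphism might act on $H/\Phi(H)$ as no group automorphism does, making $\rho(\Out(\Fp H))$ larger than $\rho(\Out(H))$. The paper excludes this by graded rigidity (Lemma~\ref{lem:grrigid}). $\gr\theta$ restricts to a graded Lie automorphism of $L(H)=\gr\fl$, which is isomorphic to $\fl$ when $\fl$ is graded. Hence it comes from some $\beta\in\Aut(H)$ agreeing with $\theta$ on $\I/\I^2$, and $\beta^{-1}\theta\in\UAut(\Fp H)$.
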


Two ingredients carry the proof, and both are elementary once found. The
first is a logarithm: a normalized unipotent automorphism of the truncated
algebra $\Fp H/\I^{\,p}$ has $\log\alpha$ well defined, and this is a genuine
derivation of the algebra in spite of the characteristic
(Lemma~\ref{lem:log}). The second is a projection. Inside the small group
ring of the universal extension, truncated in degree $p$, the Lazard algebra
$\fe$ of that extension sits as a direct summand of a decomposition by
symmetrized powers, and every summand is stable under $\ad(\fe)$
(Lemma~\ref{lem:symm}). Composing the logarithm with the projection onto
$\fe$ produces a derivation of $\fe$; its exponential is the group
automorphism of Theorem~\ref{thmA}(1). The formula in (2) then follows,
since both inclusions are cheap once (1) is available.

It is worth recording what makes this work, because it is also what fails at
$p=2$. The defining truncation $\{A-1\}\cdot\I=0$ of the small group ring
forces $\log v=v-1$ for $v\in V$, and thereby places the relation module
\emph{literally inside} the Lazard algebra; the centrality $[V,E]=1$ of the
universal extension makes the relevant ideal compatible with symmetrization.
Everything killed by the projection --- the quadratic and higher parts of a
unipotent automorphism, which is where the extra freedom of the algebra over
the group lives --- turns out to be invisible on $V$. In characteristic $2$
the three ingredients fail simultaneously: $\binom p2$ is odd, $1/2$ does not
exist, and degree $p$ swallows the whole flag of $V$. The known
counterexamples live exactly there.

\subsection*{Consequences}
Theorem~\ref{thmA} turns the computation of $U(H)$ into a question about
derivations acting on the homology of a nilpotent Lie algebra, and the degree
of $H_2$ alone often settles it.

\begin{introthm}\label{thmB}
Let $p\ge5$.
\begin{enumerate}[label=\textup{(\alph*)}]
\item \textup{(one layer)} If $H_2(\fl(H))$ is concentrated in a single
degree, then $U(H)=1$. This covers the relatively free groups
$F_d/\gamma_{c+1}(F_d)F_d^{\,p}$ for all $d\ge2$ and $2\le c\le p-2$, in
particular the extraspecial group $p^{1+2}_+$ of order $p^3$ and exponent
$p$.
\item \textup{(two generators)} If $H=F_2/\gamma_{c+1}(F_2)F_2^{\,p}N$ with
$3\le c\le p-2$ and $N\le\gamma_c(F_2)$, then $U(H)=1$; in particular this
holds for the group of maximal class, order $p^4$ and exponent $p$.
\item \textup{(class two)} If $H$ has class $2$ and exponent $p$, with
$L_1=H/\Phi(H)$, $L_2=H'$ and commutator map $\omega:\Lambda^2L_1\to L_2$,
then $U(H)\cong\Hom(L_1,L_2)/\ker\Om$ is elementary abelian, every element of
it is realized by a central automorphism of $H$, and $\ker\Om$ contains the
inner derivations. For the Heisenberg group $\Heis(p^n)$, the Sylow
$p$-subgroup of $SL_3(p^n)$, and for the direct power $(p^{1+2}_+)^M$ one has
$|U(H)|=p^{2n(n-1)}$, respectively $p^{2M(M-1)}$.
\end{enumerate}
In all these cases non-isomorphic central Frattini extensions of $H$ have
non-isomorphic small group rings.
\end{introthm}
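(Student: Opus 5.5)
Theorem~\ref{thmB} follows from Theorem~\ref{thmA}(2)--(4) by computing $\Om(\Der_{>0}(\fl))$ on $H_2(\fl)=\fr/[\ff,\fr]$ with its grading. In each case I lift a derivation $\mu$ with $\mu(\fl)\subseteq\gamma_2(\fl)$ to a derivation $\tilde\mu$ of $\ff$ sending generators into $\gamma_2(\ff)$. Such a $\tilde\mu$ raises degree by at least one, and on $\fr/[\ff,\fr]$ its class $\Om(\mu)$ strictly raises the degree filtration. All the groups in (a)--(c) have graded Lazard algebras, being quotients of free nilpotent algebras by homogeneous ideals. So the final sentence of Theorem~\ref{thmB} follows from Theorem~\ref{thmA}(4) once each $U(H)$ has been identified, and the real content is the three computations.

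\emph{Part (a).} If $H_2(\fl)$ sits in a single degree $k$, then $\Om(\mu)$ maps $H_2(\fl)_k$ into degrees $>k$, which are zero. Hence $\Om(\mu)=0$ and $U(H)=1$ by Theorem~\ref{thmA}(2). For $\fl=\ff/\gamma_{c+1}(\ff)$ we have $\fr=\gamma_{c+1}(\ff)$ and $[\ff,\fr]=\gamma_{c+2}(\ff)$. Here $\ff$ should be read as free nilpotent of class $c+1$, i.e.\ the Lazard algebra of the universal extension. Then $H_2$ is concentrated in degree $c+1$, and $p^{1+2}_+$ is the case $d=2$, $c=2$.

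\emph{Part (b).} Now $\fr=\gamma_{c+1}(\ff)+\mathfrak n$ with $\mathfrak n\le\gamma_c(\ff)$, so $H_2$ lives in degrees $c$ and $c+1$. The only possibly nonzero part of $\Om(\mu)$ is the map $\mathfrak n/(\mathfrak n\cap[\ff,\fr])\to\gamma_{c+1}/[\ff,\fr]$ induced by the degree-$+1$ component of $\tilde\mu$. Write $\tilde\mu(x_i)=\delta_i+(\text{higher})$ with $\delta_i\in\gamma_2$. For $n\in\mathfrak n$ homogeneous of degree $c$, the element $\tilde\mu(n)$ is a sum of brackets in which one generator is replaced by $\delta_i$. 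With two generators, $\gamma_2(\ff)$ is spanned modulo $\gamma_3$ by $[x_1,x_2]$, so $\delta_i=a_i[x_1,x_2]$. I then aim to show that $\tilde\mu(n)$ lies in $[\ff,\mathfrak n]+\gamma_{c+2}$. This is the main obstacle, and I would try it first by direct computation. Since $\mathfrak n$ is stable under the degree-zero inner derivations, I would express the substitution $x_i\mapsto[x_1,x_2]$ through $\ad$ of degree-one elements. This works when $c\ge3$, using that $\gamma_c$ of the free algebra on two generators is generated by iterated brackets ending in $[x_1,x_2]$. Because $\mu$ must preserve $\fr$, the coefficients $a_i$ are constrained exactly so that the result lands in $[\ff,\fr]$. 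For maximal class of order $p^4$, take $c=3$ and $\mathfrak n$ one of the two degree-3 basic commutators.

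\emph{Part (c).} For class $2$, $\fl=L_1\oplus L_2$ and $\Der_{>0}$ consists of the maps $L_1\to L_2$; all of these are derivations because $[L_2,\fl]=0$. So $\Der_{>0}\cong\Hom(L_1,L_2)$. The group $U(H)=\exp\Om(\Hom)\cong\Hom(L_1,L_2)/\ker\Om$ is elementary abelian, since $\Om$ is linear and the images commute: they raise degree by one on a two-step filtration, so products of two vanish. The automorphism $\exp(\mu)=1+\mu$ is central. Inner derivations $\ad(x)$ map to zero under $\Om$ by the standard fact that inner derivations act trivially on homology.

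For $\Heis(p^n)$, I would compute $H_2=\ker(\Lambda^3$-type map$)$ as follows. We have $H_2(\fl)_2=\ker\omega$ and $H_2(\fl)_3=(L_1\otimes L_2)/\operatorname{im}(\text{Jacobi})$, and $\Om(\mu)$ sends $\sum a\wedge b\in\ker\omega$ to $\sum(a\otimes\mu b-b\otimes\mu a)$. Here $\dim L_1=2n$ and $\dim L_2=n$, so $\dim\Hom=2n^2$. I expect $\ker\Om$ to equal the inner derivations plus those coming from the $\mathbb F_{p^n}$-linear structure, of total dimension $2n$, which gives $2n^2-2n=2n(n-1)$. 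The same count with $M$ commuting copies of $p^{1+2}_+$ gives $2M(M-1)$; the diagonal blocks, of dimension $2M$, are killed. I would verify the kernel dimension by a weight-basis computation over $\overline{\mathbb F}_p$, where $\Heis(p^n)$ becomes $n$ copies of the $3$-dimensional Heisenberg algebra, so both cases reduce to the second one.
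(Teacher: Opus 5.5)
Your route is the paper's at every step: filtration-raising kills $\Om$ in (a), the degree-one part of a lift is inner in (b), and in (c) you use $\Der_{>0}(\fl)=\Hom(L_1,L_2)$, $\Om(\chi)^2=0$, inner derivations in $\ker\Om$, and extension of scalars to reduce $\Heis(p^n)$ to $n$ copies of the Heisenberg algebra.

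The gap is that the two numerical claims $|U(H)|=p^{2M(M-1)}$ and $p^{2n(n-1)}$ are not proved. You only \emph{expect} $\dim\ker\Om=2M$ (resp.\ $2n$). Knowing that the diagonal blocks $\bigoplus_i\Hom(U_i,Z_i)$, i.e.\ the inner derivations, lie in $\ker\Om$ gives only $\log_p|U(H)|\le 2M^2-2M$. Equality needs every off-diagonal component of $\chi$ to be detected by $\Om$, and that is the actual content of Proposition~\ref{prop:hd}.

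In your own model this is short. Write $L_1=\bigoplus_iU_i$ and $L_2=\bigoplus_iZ_i$. The Jacobi image of $a\wedge b\wedge c$ with $a,b\in U_i$, $c\in U_j$, $j\ne i$, is $c\otimes\omega(a,b)$, and these span $U_j\otimes Z_i$. Terms from three distinct blocks vanish and $\Lambda^3U_i=0$, so $\fv_3=(L_1\otimes L_2)/\mathrm{Jac}=\bigoplus_iU_i\otimes Z_i$. Now take $v\in U_i$, $w\in U_j$, $i\ne j$; these $v\wedge w$ span $K=\fv_2$. With $\chi_{ij}:U_i\to Z_j$ the components of $\chi$, your formula becomes
\[
\Om(\chi)(v\wedge w)=v\otimes\chi_{ji}(w)-w\otimes\chi_{ij}(v)\in(U_i\otimes Z_i)\oplus(U_j\otimes Z_j).
\]
This vanishes for all $v,w$ iff $\chi_{ij}=\chi_{ji}=0$. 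So $\ker\Om=\bigoplus_i\Hom(U_i,Z_i)$ exactly. Base change then gives $2n(n-1)$, using that $\dim\Om(\Der_{>0})$ is unchanged by extending scalars. Note also that for $\Heis(p^n)$ the inner derivations already fill the whole $2n$-dimensional space of $\mathbb F_{p^n}$-linear maps $L_1\to L_2$, so your ``plus'' adds nothing.

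Two smaller points.

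In (b), the hedging hides a complete argument, and one remark is wrong. For \emph{every} $(a_1,a_2)$, the degree-one component of $\tilde\mu$ is the derivation $\ad_y$ of $\ff$ with $y=a_2x_1-a_1x_2$, since they agree on generators. The remaining components send $\fr\subseteq\gamma_c(\ff)$ into $\gamma_{c+2}(\ff)=0$. Hence $\tilde\mu(\fr)\subseteq[\ff,\fr]$, i.e.\ $\Om(\mu)=0$. No constraint on the $a_i$ is involved, and no fact about generators of $\gamma_c$ is needed; this is the paper's ``every degree-one derivation is inner''.

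Your claim that all groups in (a)--(c) have graded Lazard algebras holds for the named families: the relatively free groups, the groups in (b), and every class-two algebra. It does not follow from the one-layer hypothesis of (a). For a non-graded one-layer $H$, the final sentence would need the Levi factorization, which the paper leaves open (Problem~\ref{prob:levi}). The paper draws the small-group-ring conclusion only for graded $\fl(H)$, via Corollary~\ref{cor:carnot}.
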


Part (b) covers, in one line of Jacobi's identity, a base whose treatment by
the flag structure of $V$ requires a computation in degree $5$ of a
Poincar\'e--Birkhoff--Witt basis. Part (c) requires no hypothesis on
$\omega$ whatever, and so applies to all the semifield groups of Verardi
\cite{Ver87}, among them the families of Lewis and Wilson \cite{LW12} of
$p^{n^2/24}$ groups of order $p^n$ sharing a character table --- precisely
where ordinary character theory is blind and only the modular group algebra
can separate.

\subsection*{Conventions}
Throughout, $p$ is a prime and $K=\Fp$; all groups are finite $p$-groups.
$\I=\I(KG)$ is the augmentation ideal, $D_n(G)=G\cap(1+\I^n)$ the Jennings
series, $L_n(G)=D_n(G)/D_{n+1}(G)$ and $L(G)=\bigoplus_nL_n(G)$. Commutators
are $[g,h]=g^{-1}h^{-1}gh$; in associative algebras $[a,b]=ab-ba$. We write
$\gamma_n$ for the lower central series and $\mho_k(G)=\langle g^{p^k}
\rangle$. Free groups are $F=F_d$; free Lie algebras over $\Fp$ are
$\ell=\ell(d)=\bigoplus_k\ell_k$, and $\ell_k(d)=\dim\ell_k$.

\section{Preliminaries}\label{sec:prelim}

\subsection{Jennings, Quillen, and graded rigidity}
By Jennings' theorem \cite{Jen41},
\begin{equation}\label{eq:jennings}
D_n(G)=\prod_{ip^j\ge n}\mho_j(\gamma_i(G)),
\end{equation}
and $L(G)$ is a graded restricted Lie algebra under commutation and $p$-th
powers, generated in degree one. By Quillen \cite{Qui68},
$\gr(KG)=\bigoplus_n\I^n/\I^{n+1}$ is the restricted enveloping algebra
$u(L(G))$, the map $L_n(G)\to\I^n/\I^{n+1}$, $gD_{n+1}\mapsto g-1+\I^{n+1}$,
being an embedding. For $G$ abelian, \eqref{eq:jennings} reads
$D_n(G)=\mho_{\lceil\log_pn\rceil}(G)$.

\begin{lemma}[graded rigidity]\label{lem:grrigid}
Let $\Theta:KG\to K\widetilde G$ be a $K$-algebra isomorphism. Then
$\gr\Theta:u(L(G))\to u(L(\widetilde G))$ maps $L_n(G)$ onto
$L_n(\widetilde G)$ for every $n$, compatibly with brackets and $p$-th
powers.
\end{lemma}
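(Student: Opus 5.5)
The plan is to obtain both claims from the characterization of the Jennings series through the augmentation ideal, $D_n(G)=G\cap(1+\I^n)$, together with Quillen's identification of $L_n(G)$ with its image in $\I^n/\I^{n+1}$. Since $\Theta$ is a $K$-algebra isomorphism between augmented local algebras, it carries the augmentation ideal to the augmentation ideal. The reason is that $\I$ is the Jacobson radical of $KG$ for a $p$-group $G$, and radicals are preserved by isomorphisms. Consequently $\Theta(\I^n)=\widetilde{\I}^{\,n}$ for every $n$, so $\gr\Theta$ is a well-defined isomorphism of graded algebras $u(L(G))\to u(L(\widetilde G))$.

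The main step is to show that $\gr\Theta$ carries $L_n(G)$ into $L_n(\widetilde G)$. I would characterize the image of $L_n(G)$ in $\I^n/\I^{n+1}$ intrinsically, using only the algebra structure. The natural candidate is the set of classes $x-1+\I^{n+1}$ with $x$ ranging over the group of units $1+\I^n$. Given such a unit $x$, consider the element $\Theta(g)$ for $g\in D_n(G)$. It is a unit of $1+\widetilde\I^{\,n}$, but it need not be a group element, so an extra argument is needed. I would use the fact that $1+\widetilde\I$ is a $p$-group in which $\widetilde G$ is a subgroup. Write $\Theta(g)=\sum_h a_h h$ with augmentation $\sum a_h=1$. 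Modulo $\widetilde\I^{\,n+1}$, the leading term of $\Theta(g)-1$ lies in $\widetilde\I^{\,n}/\widetilde\I^{\,n+1}$.

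The intrinsic description I would aim for is the following. Let $M_n\subseteq\I^n/\I^{n+1}$ be the subspace spanned by all classes $u-1$ with $u\in1+\I^n$ such that $u$ is a product of commutators and $p$-th powers of units, in the pattern dictated by Jennings' formula \eqref{eq:jennings}. Since $\Theta$ preserves products, commutators and $p$-th powers of units, $\gr\Theta(M_n)=\widetilde M_n$.

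This reduces the problem to the identity $M_n=L_n(G)$, viewed inside $u(L(G))$. The inclusion $L_n(G)\subseteq M_n$ is immediate from \eqref{eq:jennings}. The reverse inclusion is the main obstacle. One would show that, for units $u\in1+\I^i$ and $v\in1+\I^j$, the class of $[u,v]-1$ in degree $i+j$ is the Lie bracket of the leading terms. Likewise, the class of $u^p-1$ in degree $pi$ is the $p$-th power of the leading term. By induction on degree, starting from degree $1$ where $\I/\I^2\cong L_1(G)$, these classes therefore lie in the restricted Lie subalgebra generated by $L_1(G)$. That subalgebra is exactly $L(G)$, because $L(G)$ is generated in degree one.

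Bijectivity then follows from two facts: $\gr\Theta$ is injective, and $\dim L_n(G)=\dim L_n(\widetilde G)$. The dimension equality holds because the Hilbert series of $u(L)$ determines the dimensions $\dim L_n$ by the PBW formula, and these Hilbert series agree since $\gr\Theta$ is a graded isomorphism. Compatibility with brackets and $p$-th powers is automatic, since $\gr\Theta$ is an algebra map and these operations in $u(L)$ are $ab-ba$ and $a^p$.
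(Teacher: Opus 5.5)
Your argument is correct once $M_n$ is made precise, but it takes a longer route than the paper. You characterize the image of $L_n(G)$ in $\I^n/\I^{n+1}$ as the image $M_n$ of the subgroup $\mathcal D_n:=\prod_{ip^j\ge n}\mho_j(\gamma_i(1+\I))$ of the unit group $1+\I$. Since $\Theta$ restricts to a group isomorphism $1+\I\to1+\widetilde{\I}$, it carries $\mathcal D_n$ onto its counterpart.

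The paper stays inside the graded algebra. $\gr\Theta$ is a graded algebra isomorphism, so it maps the degree-one part $\I/\I^2=L_1(G)$ onto $L_1(\widetilde G)$. Hence it maps the restricted Lie subalgebra generated in degree one (bracket $ab-ba$, $p$-map $a\mapsto a^p$) onto its counterpart, and that subalgebra is $L(G)$ because $L(G)$ is generated in degree one. Your proof of $M_n\subseteq L_n(G)$ ends by invoking exactly this fact. Applied directly to $\gr\Theta$, it finishes the proof in one line. So the detour through units is unnecessary, and so is the Hilbert-series count: surjectivity already follows from applying your argument to $\Theta^{-1}$.

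What your route does buy is independence from the generation fact. For $M_n\subseteq L_n(G)$ you only need $L(G)$ to be a restricted Lie subalgebra of $u(L(G))$ containing $\I/\I^2$. The chain $L_n(G)\subseteq M_n\subseteq(\text{restricted subalgebra generated by }L_1(G))_n\subseteq L_n(G)$ then re-derives generation in degree one from Jennings' formula. It also shows that $\Theta$ respects a canonical filtration of $1+\I$ that induces the Jennings series on $G$.

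Two points need tightening.
\begin{itemize}
\item Your first candidate, the classes of $x-1$ with $x\in1+\I^n$, is all of $\I^n/\I^{n+1}$. Discard it, together with the remarks on $\Theta(g)$, which lead nowhere.
\item More importantly, ``the pattern dictated by Jennings' formula'' must mean exactly $\mathcal D_n$. In the induction, each element must be read in the degree its construction assigns, not in its actual filtration degree: weight $i$ for elements of $\gamma_i(1+\I)$, then $ip^j$ after a $p^j$-th power, with leading terms allowed to vanish.
\end{itemize}
Under a looser reading (products of commutators and $p$-th powers of units that happen to lie in $1+\I^n$), $M_n=L_n(G)$ is false. Take $G=p^{1+2}_+$ of exponent $p\ge3$ with generators $g_1,g_2$, let $e_i$ be the class of $g_i-1$, and put $x=g_1$, $y=1+(g_1-1)(g_2-1)$. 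Then $[x,y]\in1+\I^3$ and $[x,y]-1\equiv(g_1-1)\bigl((g_1-1)(g_2-1)-(g_2-1)(g_1-1)\bigr)$ modulo $\I^4$. Its class $e_1[e_1,e_2]$ is nonzero in $u(L(G))$ by PBW, whereas $L_3(G)=0$. With the precise definition, your leading-term computations for $[u,v]-1$ and for $u^p-1=(u-1)^p$ do give $M_n\subseteq L_n(G)$, and the proof goes through.
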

\begin{proof}
$\Theta$ preserves the radical, hence the powers $\I^n$, so $\gr\Theta$ is an
isomorphism of graded algebras; it maps $L_1(G)=\I/\I^2$ onto
$L_1(\widetilde G)$, hence the restricted Lie subalgebra generated by
$L_1(G)$, which is $L(G)$, onto $L(\widetilde G)$, preserving degrees,
brackets and $p$-th powers (these being the associative ones).
\end{proof}

\subsection{The Hertweck--Soriano frame}\label{subsec:HS}
Fix $H$ with $d=d(H)$ and a minimal presentation $1\to R\to F\to H\to1$, so
that $R\le\Phi(F)=F^p[F,F]$. Put
\[
L=[F,R]R^p,\qquad E=F/L,\qquad V=R/L.
\]
Then $V$ is elementary abelian, central in $E$ and contained in $\Phi(E)$,
and $1\to V\to E\to H\to1$ is universal among central Frattini extensions
$1\to A\to G\to H\to1$ (those with $A\le\Phi(G)$): each such is $E/U\to H$
for a subspace $U\le V$. We write $\mathcal F(H)$ for this class. The
\emph{small group ring} of $(G,A)$ is $s(G,A)=KG/\I(KA)\I(KG)$, together with
the ideal $\{A-1\}$, the image of $\I(KA)KG$; one has $\{A-1\}\cong A$
canonically, $\{A-1\}$ is central, and
\begin{equation}\label{eq:trunc0}
\{A-1\}\cdot\I(s(G,A))=\I(s(G,A))\cdot\{A-1\}=0
\end{equation}
\cite[(3.1)]{HS07}. Put $s=s(E,V)$ and $W_n=V\cap D_n(E)$, the \emph{depth
flag} of $V$; note $W_2=V$. Over the prime field, Passi and Sehgal's
recognition of the Jennings ideals \cite{PS72} gives, for every
$(G,A)\in\mathcal F(H)$,
\begin{equation}\label{eq:PS}
\{A-1\}\cap\I(s(G,A))^n=\{A\cap D_n(G)-1\}\qquad(n\ge1),
\end{equation}
see \cite[\S3]{HS07}. Every $\alpha\in\Aut(KH)$ lifts to an automorphism
$\hat\alpha$ of $s(E,V)$ preserving $\{V-1\}$ \cite[(4.9), Rem.~4.5]{HS07},
hence restricts to an automorphism of $V=1+\{V-1\}$, independent of the lift
(Lemma~\ref{lem:welldef}); inner automorphisms act trivially, and we write
$\rho:\Out(KH)\to GL(V)$ and $v\mapsto v^\alpha$.

\begin{theorem}[{\cite[Thm.~4.4, Cor.~3.7]{HS07}}]\label{thm:HS}
Isomorphism classes of extensions in $\mathcal F(H)$ correspond to
$\rho(\Out(H))$-orbits of subspaces of $V$, and isomorphism classes of the
pairs $(s(G,A),\{A-1\})$ correspond to $\rho(\Out(KH))$-orbits.
\end{theorem}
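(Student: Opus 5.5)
The plan is to handle both correspondences with one template. First, show that every object of the relevant kind is a quotient of the universal one by a subspace of $V$. Second, show that every isomorphism between two such quotients lifts to an automorphism of the universal object that preserves $V$. Third, show that the induced action on $V$ depends only on the outer class. Throughout, isomorphisms of extensions are understood as isomorphisms of pairs $(G,A)$, equivalently isomorphisms $G\to\widetilde G$ carrying $A$ onto $\widetilde A$.

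\emph{Group part.} Let $(G,A)\in\mathcal F(H)$. Since $A\le\Phi(G)$, we have $d(G)=d(H)=d$, so the map $F\to H$ lifts to a surjection $\pi:F\to G$. Its kernel $K$ lies in $R$. Because $R/K\cong A$ is central and elementary abelian, $K\supseteq[F,R]R^p=L$. Hence $G\cong E/U$ with $U=K/L\le V$.

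Now take an isomorphism of pairs $(E/U,V/U)\to(E/U',V/U')$. It induces some $\beta\in\Aut(H)$. Lift $\beta$ to an endomorphism $\phi$ of the free group $F$ with $\phi(R)\subseteq R$; then $\phi(L)\subseteq L$, so $\phi$ induces an endomorphism of $E$. This endomorphism is onto modulo $\Phi(E)$, hence an automorphism, and it restricts to $\rho(\beta)$ on $V$.

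Next, check that $\rho(\beta)$ is independent of the lift. Two lifts differ by $x_i\mapsto x_ir_i$ with $r_i\in R$. Since $R$ is central modulo $L$, a relator $v\in R$ changes by $\prod_i r_i^{e_i(v)}$ modulo $L$, where $e_i(v)$ is the exponent sum of $x_i$ in $v$. Since $v\in\Phi(F)$, each $e_i(v)$ is divisible by $p$, so this product lies in $R^p\subseteq L$. For inner $\beta$, choose the lift to be the corresponding inner automorphism of $F$; it acts trivially on the central $V$. So $\rho$ factors through $\Out(H)$.

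The lifted automorphism of $E$ carries $U$ to the kernel of the composite surjection onto $E/U'$, which is $U'$. Conversely, any $\rho(\beta)$ with $U^{\rho(\beta)}=U'$ induces an isomorphism $E/U\cong E/U'$ of pairs. This proves the first correspondence.

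\emph{Algebra part.} The identity $\I(KA)KG/\I(KA)\I(KG)=\{A-1\}$ gives $s(G,A)/\{A-1\}\cong KH$. Hence an isomorphism of pairs $(s(E/U,V/U),\{V/U-1\})\to(s(E/U',V/U'),\{V/U'-1\})$ induces an automorphism $\alpha$ of $KH$. Conversely, $s(E/U,V/U)=s(E,V)/\{U-1\}$. By \cite[(4.9)]{HS07}, $\alpha$ lifts to some $\hat\alpha$ on $s(E,V)$ preserving $\{V-1\}$, and $\hat\alpha$ maps $\{U-1\}$ onto $\{U^{\alpha}-1\}$. So if $U^\alpha=U'$ the pairs are isomorphic.

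For the converse, the key claim is that the given isomorphism $\theta$ can be lifted to an automorphism $\hat\theta$ of $s(E,V)$ which induces $\theta$ and preserves $\{V-1\}$. Once this holds, comparing kernels gives $U^{\rho(\alpha)}=U'$. Independence of the lift (Lemma~\ref{lem:welldef}) together with triviality of inner automorphisms then shows that everything factors through $\Out(KH)$.

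\emph{Main obstacle.} I expect the hard step to be the lifting claim: that $s(E,V)$ is universal among the small group rings $s(G,A)$, $(G,A)\in\mathcal F(H)$, in the same way that $E$ is universal among the groups. The approach I would try first mimics the free-group argument. Choose preimages in $s(E/U',V/U')$ of $\theta(x_i-1)$ and lift them to elements $y_i$ of $\I(s(E,V))$. The elements $1+y_i$ are units. Define $\hat\theta$ on $KF$ by $x_i\mapsto 1+y_i$, which makes sense because $F$ is free. Then show that this map kills the kernel of $KF\to s(E,V)$. For this I would use \eqref{eq:trunc0} and \eqref{eq:PS}: relators map into $1+\{V-1\}$, and \eqref{eq:trunc0} makes the truncation behave like the conditions ``$[F,R]R^p$ maps to $1$''. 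Finally, surjectivity of $\hat\theta$ follows because it is surjective modulo $\I^2$, and $\I$ is nilpotent.
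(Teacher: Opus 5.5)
The paper gives no proof of this statement. It imports it from Hertweck--Soriano (Thm.~4.4 and Cor.~3.7 there), together with their lifting result (4.9) and Rem.~4.5, which it also uses in Lemma~\ref{lem:welldef}. Your proposal is correct: it reconstructs the result from the universal properties of $E=F/[F,R]R^p$ and of $s(E,V)$. Reading ``isomorphism of extensions'' as isomorphism of pairs is the interpretation under which the statement holds.

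One small point in the group part. The kernel comparison needs the lift $\phi$ to be chosen through the given isomorphism of pairs $\vartheta\colon E/U\to E/U'$: take $\phi(x_i)$ to be a preimage of $\vartheta(x_iU)$. Then $E\xrightarrow{\bar\phi}E\to E/U'$ equals $E\to E/U\xrightarrow{\vartheta}E/U'$, so $\bar\phi(U)=U'$. Your independence-of-lift argument then transfers this to every lift of $\beta$.

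The step you single out as the main obstacle closes in a few lines with the tools you name, and \eqref{eq:PS} is not needed.

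\begin{itemize}
\item Let $\psi\colon F\to s(E,V)^\times$ send $x_i\mapsto 1+y_i$. Composed with $s(E,V)\to s(E/U',V/U')$, it agrees on the free generators with $F\to E/U\to s(E/U,V/U)\xrightarrow{\theta}s(E/U',V/U')$, hence equals it.
\item Since $\theta$ carries $\{V/U-1\}$ onto $\{V/U'-1\}$ and $\{U'-1\}\subseteq\{V-1\}$, this gives $\psi(R)\subseteq 1+\{V-1\}$.
\item Write $\psi(r)=1+z$. By \eqref{eq:trunc0}, $z\,\I(s)=\I(s)\,z=0$, so $z$ is central and $z^2=0$. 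Therefore $\psi([f,r])=1$ and $\psi(r^p)=1+pz=1$, so $\psi(L)=1$.
\item Also $(\psi(r)-1)(\psi(f)-1)=0$, because $\psi(f)\in 1+\I(s)$. So the extension of $\psi$ to $KF$ kills $\I(KL)KF$ and the elements $(r-1)(f-1)$, which together generate $\ker(KF\to s(E,V))$. Hence $\hat\theta$ is a well-defined algebra endomorphism of $s(E,V)$.
\item $\hat\theta$ is onto, because it is onto modulo $\I(s)^2$ and $\I(s)$ is nilpotent. Surjectivity modulo $\I(s)^2$ holds because the projection induces $\I(s)/\I(s)^2\cong\I(KH)/\I(KH)^2$ (as $V\le\Phi(E)$), and the automorphism of $KH$ induced by $\theta$ is onto there.
\item $\hat\theta$ preserves $\{V-1\}$ because $\psi(V)\subseteq 1+\{V-1\}$. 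It lifts $\theta$ because both composites agree on the $\hat x_i$.
\end{itemize}

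Running the same construction with $\alpha\in\Aut(KH)$ in place of $\theta$ reproves the lifting (4.9). Here $\psi(R)\subseteq1+\{V-1\}$ because $\{V-1\}=\ker(s(E,V)\to KH)$, so your algebra part need not cite (4.9) either.

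What your route buys is that both halves of the theorem come out of one universality argument whose only algebraic input is \eqref{eq:trunc0}. The paper only needs the statement, so it simply cites the source.
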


Since $KH$ is local, every algebra automorphism preserves $\I$ and acts on
$\I/\I^2\cong H/\Phi(H)$. Following \cite[Def.~5.1]{HS07}, $\UAut(KH)$ is the
group of automorphisms inducing the identity on $\I/\I^2$; the lift
$\hat\alpha$ of $\alpha\in\UAut(KH)$ satisfies
$(\hat\alpha-1)\I(s)\subseteq\I(s)^2$ \cite[Lem.~5.2]{HS07}. Every
$\beta\in\Aut(H)$ extends to $KH$, and $\beta\in\UAut(KH)$ if and only if
$\beta$ is the identity on $H/\Phi(H)$.

\begin{lemma}\label{lem:welldef}
The action of $\alpha\in\Aut(KH)$ on $V$ does not depend on the lift.
\end{lemma}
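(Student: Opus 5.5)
We must show: if $\hat\alpha_1,\hat\alpha_2$ are two automorphisms of $s=s(E,V)$ lifting $\alpha$ and preserving $\{V-1\}$, then they induce the same map on $\{V-1\}$. Setting $\beta=\hat\alpha_2^{-1}\hat\alpha_1$, it suffices to show that an automorphism $\beta$ of $s$ which lifts the identity of $KH$ and preserves $\{V-1\}$ is the identity on $\{V-1\}$.

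The first step identifies the kernel of $s\to KH$. The natural map $KE\to KH$ has kernel $\I(KV)KE$, so $s\to KH$ is surjective with kernel $\{V-1\}$. Since $\beta$ lifts the identity, we have $\beta(x)-x\in\{V-1\}$ for every $x\in s$.

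The second step shows that $\{V-1\}\subseteq\I(s)^2$. This holds because $V\le\Phi(E)=D_2(E)$, so $v-1\in\I(KE)^2$ for $v\in V$; equivalently, we may invoke \eqref{eq:PS} with $n=2$ and $W_2=V$. Now take $v\in V$ and write $v-1=\sum_i a_ib_i$ with $a_i,b_i\in\I(s)$. Put $\beta(a_i)=a_i+c_i$ and $\beta(b_i)=b_i+d_i$, where $c_i,d_i\in\{V-1\}$; note that $\beta$ preserves $\I(s)$, since $s$ is local. Then
\[
\beta(v-1)=\sum_i (a_i+c_i)(b_i+d_i)=\sum_i a_ib_i+\sum_i\bigl(c_ib_i+a_id_i+c_id_i\bigr).
\]
By the truncation \eqref{eq:trunc0}, each of the products $c_ib_i$, $a_id_i$ and $c_id_i$ vanishes: every one of them is a product of an element of $\{V-1\}$ with an element of $\I(s)$, and $\{V-1\}\subseteq\I(s)$. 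Hence $\beta(v-1)=v-1$, so $\beta$ fixes $\{V-1\}$ pointwise, and the actions of $\hat\alpha_1$ and $\hat\alpha_2$ on $V=1+\{V-1\}$ coincide.

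The only point that needs care is the inclusion $\{V-1\}\subseteq\I(s)^2$, which depends on $V\le\Phi(E)$, that is, on the extension being Frattini. It is exactly this inclusion that lets the truncation \eqref{eq:trunc0} absorb every correction term. The rest of the argument is formal.
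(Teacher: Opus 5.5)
Your proof is correct, and it takes a different route from the paper's.

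The paper works with group elements. It quotes \cite[Rem.~4.5]{HS07} to say that two lifts differ on the images $\hat x_i$ of a free basis by $\hat x_i\mapsto\hat x_iv_i$, with $v_i\in V$ central. It then evaluates a relator $r$ on the modified generators to get $r(\hat x)\prod_iv_i^{e_i(r)}$. The correction vanishes because $r\in F^p[F,F]$ forces $p\mid e_i(r)$ and $V$ is elementary abelian.

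You instead argue entirely inside the ring. The map $\beta-1$ takes values in the ideal $\{V-1\}=\ker(s\to KH)$, which annihilates $\I(s)$ on both sides by \eqref{eq:trunc0}. Hence $\beta-1$ kills $\I(s)^2$, and $\{V-1\}\subseteq\I(s)^2$ because $V\le\Phi(E)=D_2(E)$. In both proofs the Frattini hypothesis is what removes the correction: in the paper as $p\mid e_i(r)$, in yours as $W_2=V$.

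Your version needs neither a presentation nor the cited description of the set of lifts. It applies verbatim to any $(G,A)\in\mathcal F(H)$ and to any automorphism of $s(G,A)$ congruent to the identity modulo $\{A-1\}$. It is also exactly the associative counterpart of the paper's own argument for Lemma~\ref{lem:Omega}(1). The paper's exponent-sum computation, for its part, is the same device it reuses in Proposition~\ref{prop:absorb}.

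A minor point: the hypothesis that $\beta$ preserves $\{V-1\}$ and $\I(s)$ is automatic, since $\pi\beta=\pi$ for the projection $\pi:s\to KH$.
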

\begin{proof}
Two lifts differ by an element of $\Hom(H/\Phi(H),V)$ \cite[Rem.~4.5]{HS07},
acting on the images $\hat x_i\in E\subset s^\times$ of a free basis by
$\hat x_i\mapsto\hat x_iv_i$ with $v_i\in V$ central. If $v$ is the class of
a relator $r$ then $r(\hat x_1v_1,\dots)=r(\hat x)\prod_iv_i^{e_i(r)}$, where
$e_i(r)$ is the exponent sum of $r$ in $x_i$; now $r\in\Phi(F)$ gives
$p\mid e_i(r)$, and $V$ is elementary abelian.
\end{proof}

\begin{definition}
$U(H)$ is the image of $\UAut(KH)$ in $GL(V)$. The \emph{Levi factorization}
is the statement $\Aut(KH)=\Aut(H)\cdot\UAut(KH)$; it holds if and only if
the images of $\Aut(H)$ and of $\Aut(KH)$ in $GL(H/\Phi(H))=GL(\I/\I^2)$
coincide.
\end{definition}

\begin{lemma}\label{lem:reform}
\begin{enumerate}[label=\textup{(\arabic*)}]
\item For $\alpha\in\UAut(KH)$ and $v\in W_n$ one has
$v^\alpha v^{-1}\in W_{n+1}$; in particular $U(H)$ is a $p$-group of
unipotent transformations stabilizing the depth flag.
\item $\rho(\Out(KH))\supseteq\rho(\Out(H))\cdot U(H)$, with equality under
the Levi factorization.
\item If the Levi factorization holds and every element of $U(H)$ lies in
$\rho(\im(\Aut(H)\to\Out(KH)))$, then $\rho(\Out(KH))=\rho(\Out(H))$, and
for $G,\widetilde G\in\mathcal F(H)$ an isomorphism of pairs
$(s(G,A),\{A-1\})\cong(s(\widetilde G,\widetilde A),\{\widetilde A-1\})$
implies $G\cong\widetilde G$.
\end{enumerate}
\end{lemma}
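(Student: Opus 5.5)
We prove the three parts of Lemma~\ref{lem:reform} in order. Each one relies on the filtration compatibility in \eqref{eq:PS} and on Theorem~\ref{thm:HS}.

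For (1), let $\alpha\in\UAut(KH)$ with lift $\hat\alpha$ on $s=s(E,V)$. By \cite[Lem.~5.2]{HS07}, $(\hat\alpha-1)\I(s)\subseteq\I(s)^2$. An induction on $k$ then gives $(\hat\alpha-1)\I(s)^k\subseteq\I(s)^{k+1}$: write an element of $\I(s)^k$ as a sum of products $a_1\cdots a_k$ and expand $\hat\alpha(a_1)\cdots\hat\alpha(a_k)-a_1\cdots a_k$ telescopically. Now take $v\in W_n$. By \eqref{eq:PS}, $v-1\in\{V-1\}\cap\I(s)^n$, so $\hat\alpha(v-1)-(v-1)\in\I(s)^{n+1}$. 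Since $\hat\alpha$ preserves $\{V-1\}$, this difference also lies in $\{V-1\}$. By \eqref{eq:trunc0}, products of two elements of $\{V-1\}$ vanish, so $(x-1)+(y-1)=xy-1$ for $x,y\in V$, and $V\to\{V-1\}$, $v\mapsto v-1$, is an isomorphism. Hence the difference equals $v^\alpha v^{-1}-1$. By \eqref{eq:PS} again, $v^\alpha v^{-1}\in V\cap D_{n+1}(E)=W_{n+1}$. The flag $W_2=V\supseteq W_3\supseteq\cdots$ is finite and terminates at $1$, because $E$ is a finite $p$-group and its Jennings series reaches $1$. Consequently every element of $U(H)$ is unipotent with respect to this flag. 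The group of such flag-unipotent transformations is a $p$-group, since it is conjugate to a subgroup of upper unitriangular matrices over $\Fp$. Therefore $U(H)$ is a $p$-group.

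For (2), first note that $\rho$ is a homomorphism on $\Out(KH)$. This holds because lifts compose and, by Lemma~\ref{lem:welldef}, the action does not depend on the lift. Both $\rho(\Out(H))$ and $U(H)$ are images of subgroups of $\Aut(KH)$, so the product $\rho(\Out(H))\cdot U(H)$ lies in $\rho(\Out(KH))$. Here the image of $\Out(H)$ is understood as coming from group automorphisms extended to $KH$; this extension is compatible with the action on $V$ because a group automorphism of $H$ lifts to $E$ by universality. For the equality, assume the Levi factorization $\Aut(KH)=\Aut(H)\cdot\UAut(KH)$. Every $\alpha$ can then be written $\alpha=\beta\gamma$ with $\beta\in\Aut(H)$ and $\gamma\in\UAut(KH)$, and so $\rho(\alpha)=\rho(\beta)\rho(\gamma)$. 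We should also check that the product set is a group. This follows because $\UAut(KH)$ is normal in $\Aut(KH)$, being the kernel of the action on $\I/\I^2$, so $U(H)$ is normalized by $\rho(\Out(H))$.

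For (3), combine (2) with the hypothesis $U(H)\subseteq\rho(\Out(H))$. This gives $\rho(\Out(KH))=\rho(\Out(H))U(H)=\rho(\Out(H))$. By Theorem~\ref{thm:HS}, an isomorphism of pairs puts the corresponding subspaces of $V$ in a single $\rho(\Out(KH))$-orbit, which is now a single $\rho(\Out(H))$-orbit, and hence the extensions are isomorphic. The main obstacle is not depth but bookkeeping. In (1), one must identify $\hat\alpha(v)v^{-1}$ on $V$ with the additive difference in $\{V-1\}$, and this depends on the truncation \eqref{eq:trunc0}. In (2), one must confirm that the action of a group automorphism of $H$ computed through $KH$ agrees with its action through the group lift to $E$, and this again reduces to Lemma~\ref{lem:welldef}.
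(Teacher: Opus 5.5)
Your proof is correct and follows essentially the same route as the paper: for (1) you combine $(\hat\alpha-1)\I(s)^n\subseteq\I(s)^{n+1}$ with the preservation of $\{V-1\}$, the truncation \eqref{eq:trunc0} and \eqref{eq:PS}, while (2) and (3) come from the Levi factorization and Theorem~\ref{thm:HS}, exactly as in the paper. The extra bookkeeping you supply fills in details the paper leaves implicit: the telescoping argument, the fact that $\rho$ is a homomorphism via Lemma~\ref{lem:welldef}, and the agreement of the group-theoretic and algebra-theoretic actions of $\Aut(H)$ on $V$.
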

\begin{proof}
(1) $\hat\alpha$ preserves $\I(s)^n$ and $\{V-1\}$, hence each $\{W_n-1\}$ by
\eqref{eq:PS}; since $(\hat\alpha-1)\I^n\subseteq\I^{n+1}$, on $\{W_n-1\}$
this reads $\{v^\alpha-1\}-\{v-1\}=\{v^\alpha v^{-1}-1\}\in\I^{n+1}$, that
is, $v^\alpha v^{-1}\in W_{n+1}$. (2) Group automorphisms extend to $KH$,
and under the Levi factorization every class of $\Out(KH)$ is a class of
$\Out(H)$ times a class of $\UAut(KH)$. (3) is (2) together with
Theorem~\ref{thm:HS}.
\end{proof}

\subsection{The Lazard correspondence}\label{subsec:lazard}
Let $p$ be odd. The Lazard correspondence \cite{Laz54},
\cite[Ch.~10]{Khu98} is an equivalence between the category of finite
$p$-groups of class $<p$ and that of nilpotent Lie rings of class $<p$ and
$p$-power order, under which a group of exponent $p$ corresponds to a
nilpotent Lie algebra over $\Fp$ of the same dimension $\log_p|G|$ and the
same class. We write $\fl(G)$ for the Lie algebra of $G$ and use that the
correspondence is functorial ($\Aut(G)=\Aut(\fl(G))$, homomorphisms
corresponding to homomorphisms), that normal subgroups correspond to ideals
and quotients to quotients, that $\gamma_n(G)$ corresponds to
$\gamma_n(\fl(G))$ and $[N,M]$ to $[\mathfrak n,\mathfrak m]$, and that the
relatively free group of exponent $p$ and class $k<p$ on $d$ generators
corresponds to the free nilpotent Lie algebra of class $k$ on $d$ generators
(the group defined by the Baker--Campbell--Hausdorff law on the latter has
the required universal property).

\subsection{Presentations and the action on homology}\label{subsec:Omega}
Let $\fl$ be a nilpotent Lie algebra over $\Fp$ of class $c$ with
$d=\dim\fl/\gamma_2(\fl)$, and let $\ff$ be the free nilpotent Lie algebra of
class $c+1$ on $d$ generators; choose a surjection $\ff\to\fl$ inducing an
isomorphism on abelianizations, with kernel $\fr$, so that
$\fr\subseteq\gamma_2(\ff)$. Put
\[
\fe:=\ff/[\ff,\fr],\qquad\fv:=\fr/[\ff,\fr]\subseteq\fe,
\]
so that $\fv$ is a central ideal of $\fe$ and $\fe/\fv=\fl$. Since
$\gamma_{c+1}(\ff)\subseteq\fr$, the ideal $[\ff,\fr]$ contains
$\gamma_{c+2}$ of the free Lie algebra, and Hopf's formula for Lie algebras
\cite[Ch.~VII]{HiSt97} gives $\fv\cong H_2(\fl)$. The filtration
$\fv\cap\gamma_n(\fe)$, $n\ge2$, of $\fv$ is its \emph{degree filtration}.

Every $\theta\in\Aut(\fl)$ lifts to an endomorphism of $\ff$ (choose images of
the generators), which is an automorphism because it is one modulo
$\gamma_2(\ff)$; it preserves $\fr$ and $[\ff,\fr]$, hence induces an
automorphism of $\fe$ preserving $\fv$. Likewise every $\mu\in\Der(\fl)$
lifts to a derivation $\tilde\mu$ of $\ff$ (choose values on the
generators): writing $\pi:\ff\to\fl$, the maps $\pi\tilde\mu$ and $\mu\pi$
are $\pi$-derivations $\ff\to\fl$ agreeing on generators, hence equal, so
$\tilde\mu(\fr)\subseteq\fr$ and then
$\tilde\mu[\ff,\fr]\subseteq[\ff,\fr]$.

\begin{lemma}\label{lem:Omega}
\begin{enumerate}[label=\textup{(\arabic*)}]
\item Any two automorphisms of $\fe$ preserving $\fv$ and inducing the same
automorphism of $\fl$ agree on $\fv$; the same holds for derivations of $\fe$
preserving $\fv$ and inducing the same derivation of $\fl$.
\item Consequently $\Aut(\fl)$ acts on $\fv$, and there is a homomorphism of
Lie algebras $\Om:\Der(\fl)\to\End(\fv)$, $\Om(\mu)=\tilde\mu|_\fv$.
\item Inner derivations lie in $\ker\Om$.
\item If $\Theta=\exp(\tilde\mu)\in\Aut(\fe)$ for a lift $\tilde\mu$ of a
nilpotent $\mu\in\Der(\fl)$ with $\exp(\mu)$ defined, then
$\Theta|_\fv=\exp(\Om(\mu))$.
\end{enumerate}
\end{lemma}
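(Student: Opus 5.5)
The whole lemma rests on one observation: two lifts of the same map differ by something that lands in the central ideal $\fv$, and such a difference dies on $\fv$ because $\fv\subseteq\gamma_2(\ff)/[\ff,\fr]$.

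\emph{Part (1).} Let $\Theta_1,\Theta_2\in\Aut(\fe)$ preserve $\fv$ and induce the same automorphism of $\fl=\fe/\fv$. Put $\delta=\Theta_2-\Theta_1$. For $x\in\fe$ we have $\delta(x)\in\fv$, and $\fv$ is central. Hence for $x,y\in\fe$,
\[
\Theta_2[x,y]=[\Theta_1x+\delta x,\ \Theta_1y+\delta y]=[\Theta_1x,\Theta_1y]=\Theta_1[x,y].
\]
So $\Theta_1$ and $\Theta_2$ agree on $[\fe,\fe]=\gamma_2(\fe)$.

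Since $\fr\subseteq\gamma_2(\ff)$, its image $\fv$ lies in $\gamma_2(\fe)$. So the two automorphisms agree on $\fv$.

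For derivations the argument is the same. Let $D_1,D_2$ be derivations of $\fe$ preserving $\fv$ and inducing the same derivation of $\fl$, and put $\delta=D_2-D_1$, which takes values in the central ideal $\fv$. Then
\[
\delta[x,y]=[\delta x,y]+[x,\delta y]=0,
\]
so $\delta$ vanishes on $\gamma_2(\fe)\supseteq\fv$.

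\emph{Part (2).} The discussion preceding the lemma supplies the lifts. Each $\theta\in\Aut(\fl)$ lifts to $\Aut(\ff)$ and hence induces an automorphism of $\fe$ preserving $\fv$. Each $\mu\in\Der(\fl)$ lifts to $\tilde\mu$ on $\ff$ preserving $\fr$ and $[\ff,\fr]$, hence induces a derivation of $\fe$ preserving $\fv$.

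By (1), the restriction to $\fv$ is independent of the lift, so the action of $\Aut(\fl)$ on $\fv$ and the map $\Om$ are well defined. A composite of lifts is a lift of the composite, which gives the action property. Likewise $[\tilde\mu,\tilde\nu]$ is a derivation of $\fe$ preserving $\fv$ that lifts $[\mu,\nu]$, so $\Om$ is a Lie homomorphism; linearity follows the same way.

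\emph{Part (3).} Let $\mu=\ad(a)$ with $a\in\fl$, and choose a preimage $\tilde a\in\fe$. Then $\ad(\tilde a)$ is a derivation of $\fe$ lifting $\ad(a)$ and preserving $\fv$, since $\fv$ is an ideal. It kills $\fv$ because $\fv$ is central. By (1), $\Om(\ad a)=0$.

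\emph{Part (4).} The lift $\tilde\mu$ preserves $\fv$, so every power $\tilde\mu^k$ restricts on $\fv$ to $(\tilde\mu|_\fv)^k=\Om(\mu)^k$. The exponential series therefore restricts termwise, giving $\Theta|_\fv=\exp(\Om(\mu))$. The only point to check is that $\exp$ makes sense on $\fv$. The series defining $\Theta$ on $\fe$ involves only the powers $\tilde\mu^k$ with $k<p$ that it already uses, so the same truncated series defines $\exp(\Om(\mu))$, provided the convention for $\exp$ of the nilpotent endomorphism $\Om(\mu)$ is the same truncated sum.

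I do not expect any step to be a real obstacle; the one point needing care is the inclusion $\fv\subseteq\gamma_2(\fe)$ used in (1), which is exactly where the hypothesis $\fr\subseteq\gamma_2(\ff)$ enters.
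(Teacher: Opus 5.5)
Your proposal is correct and follows essentially the same route as the paper. Part (1) uses a difference map with values in the central ideal $\fv$, which kills $\gamma_2(\fe)\supseteq\fv$; (2) uses lifts and their commutators; (3) lifts $\ad$; and (4) restricts the exponential series termwise. The differences are cosmetic: you use $\Theta_2-\Theta_1$ where the paper uses $\Theta_1^{-1}\Theta_2$, and you lift $\ad_a$ to $\fe$ and invoke centrality of $\fv$ where the paper lifts to $\ff$ and notes $\ad_{\tilde x}(\fr)\subseteq[\ff,\fr]$.
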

\begin{proof}
(1) If $\Theta_1,\Theta_2$ are such automorphisms then
$\Theta_1^{-1}\Theta_2$ is $x\mapsto x+\phi(x)$ with $\phi:\fe\to\fv$, and
since $\fv$ is central, $\phi([x,y])=0$; as $\fv\subseteq\gamma_2(\fe)$
(because $\fr\subseteq\gamma_2(\ff)$), $\phi$ vanishes on $\fv$. For
derivations, the difference is a derivation $\delta$ of $\fe$ with values in
$\fv$, whence $\delta[x,y]=[\delta x,y]+[x,\delta y]=0$ and $\delta$
vanishes on $\gamma_2(\fe)\supseteq\fv$. (2) follows, using that
$[\tilde\mu,\tilde\mu']$ lifts $[\mu,\mu']$. (3) $\ad_x$ lifts to
$\ad_{\tilde x}$, which maps $\fr$ into $[\ff,\fr]$. (4) is immediate from
(1).
\end{proof}

\section{Exponent $p$: the universal extension and the dictionary}
\label{sec:expp}

From now on $p$ is odd and $H$ has exponent $p$ and class $c\le p-2$, so
that $D_n(H)=\gamma_n(H)$ and $L(H)=\gr\fl(H)$, where the Lazard algebra
$\fl=\fl(H)$ carries the lower central filtration. We keep the data
$\fl=\ff/\fr$, $\fe$, $\fv$ of \S\ref{subsec:Omega}, with $\ff$ free
nilpotent of class $c+1$ on the $d$ free generators, and write
\[
C(V):=V\cap[E,E]=\ker(\varphi|_V),\qquad \varphi:E\to\bar E'=E/[E,E],
\]
for the \emph{commutator sector} of $V$.

\begin{proposition}[structure of $E$ and $V$]\label{prop:E}
\begin{enumerate}[label=\textup{(\arabic*)}]
\item $E$ has class $\le c+1\le p-1$, $\gamma_2(E)$ has exponent $p$, and
$g\mapsto g^p$ is a homomorphism $E\to V$ factoring through $H/\Phi(H)$; its
image $\mho_1(E)$ is spanned by the independent classes of
$x_1^p,\dots,x_d^p$, and
\[
V=C(V)\oplus\mho_1(E),\qquad \dim V=\dim C(V)+d .
\]
\item $D_n(E)=\gamma_n(E)\mho_1(E)$ for $n\le p$ and $D_{p+1}(E)=1$; hence
$W_p=\mho_1(E)$, $W_{p+1}=1$, and $W_n\cap C(V)=\gamma_n(E)\cap C(V)$ for
$n\le p$, with $W_p\cap C(V)=0$.
\item Put $E^\circ:=E/\mho_1(E)$ and $V^\circ:=V/\mho_1(E)\cong C(V)$. Then
$E^\circ$ has exponent $p$ and class $\le c+1$, and under the Lazard
correspondence $E^\circ\leftrightarrow\fe$, $V^\circ\leftrightarrow\fv$ and
$E^\circ/V^\circ=H\leftrightarrow\fl$; the depth filtration of $C(V)$
corresponds to the degree filtration of $\fv$.
\end{enumerate}
\end{proposition}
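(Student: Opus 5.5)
The plan is to work directly in $E=F/L$ with $L=[F,R]R^p$, using three facts: $R\ge\gamma_{c+1}(F)F^p$ (because $H$ has exponent $p$ and class $c$), $R\le\Phi(F)$, and the theory of regular $p$-groups (class $<p$ implies regular).

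\textbf{Part (1).} Since $\gamma_{c+1}(F)\le R$, we get $\gamma_{c+2}(F)\le[F,R]\le L$, so $E$ has class $\le c+1\le p-1$ and is therefore regular. For $g\in E$ we have $g^p\in R/L=V$, which is central. Hence $[g^p,h]=1$ for all $h$, and regularity gives $[g,h]^p=1$. By regularity again, $\gamma_2(E)$ is generated by elements of order $p$, so it has exponent $p$.

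The Hall--Petrescu formula in class $<p$ gives $(gh)^p=g^ph^p\prod c_i^{\binom pi}$ with $c_i\in\gamma_i(\langle g,h\rangle)$. Every $\binom pi$ with $2\le i\le p-1$ is divisible by $p$, so the product lies in $\mho_1(\gamma_2(E))=1$. Thus $g\mapsto g^p$ is a homomorphism $E\to V$. It kills $[E,E]$ because commutators have exponent $p$. It kills $E^p$ because $V$ is elementary abelian. So it factors through $E/\Phi(E)=H/\Phi(H)$, and $\mho_1(E)$ is spanned by the classes of $x_1^p,\dots,x_d^p$.

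For the splitting I will compute the abelianization, $\bar E'=F/[F,F]R^p$. From $F^p\le R\le F^p[F,F]$ we get $R^p\equiv F^{p^2}$ modulo $[F,F]$, so $\bar E'\cong(\Z/p^2)^d$. In this group the images of the $x_i^p$ are independent of order $p$, which proves independence. Moreover $\varphi(V)$, the image of $R$, equals $\varphi(\mho_1(E))$. Hence $\varphi|_{\mho_1(E)}$ is injective, $\varphi(V)=\varphi(\mho_1(E))$, and $V=\ker(\varphi|_V)\oplus\mho_1(E)=C(V)\oplus\mho_1(E)$.

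\textbf{Part (2).} This comes from Jennings' formula \eqref{eq:jennings}. We have $\mho_j(\gamma_i(E))=1$ whenever $i\ge2,j\ge1$ (by part (1)), and $\mho_2(E)=1$ because $\mho_1(E)\le V$. So the only surviving factors are $\gamma_n(E)$ and, when $p\ge n$, $\mho_1(E)$. This gives $D_n(E)=\gamma_n(E)\mho_1(E)$ for $n\le p$, and $D_{p+1}(E)=\gamma_{p+1}(E)=1$.

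Since $\mho_1(E)\le V$, Dedekind's law gives $W_n=(\gamma_n(E)\cap V)\mho_1(E)$. Because $\gamma_n(E)\cap V\subseteq C(V)$, intersecting with $C(V)$ and using $C(V)\cap\mho_1(E)=1$ yields $W_n\cap C(V)=\gamma_n(E)\cap C(V)$. Since $\gamma_p(E)=1$, we get $W_p=\mho_1(E)$ and $W_p\cap C(V)=0$.

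\textbf{Part (3).} In a regular group, $E/\mho_1(E)$ has exponent $p$, and the class does not increase. To identify $E^\circ$ under the Lazard correspondence, I will pass to the relatively free group $\Gamma=F/\gamma_{c+2}(F)F^p$, which corresponds to $\ff$ by \S\ref{subsec:lazard}. The kernel of $F\to E^\circ$ is $[F,R]R^pF^p=[F,R]F^p$. So $E^\circ=\Gamma/[\Gamma,\bar R]$, where $\bar R$ is the image of $R$ and $\Gamma/\bar R=H$.

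Under Lazard, $\bar R$ corresponds to an ideal with quotient $\fl$. Composing with the correspondence, $\Gamma\to H$ gives a surjection $\ff\to\fl$ that is an isomorphism on abelianizations. The fact that $\fe$ and $\fv$ do not depend, up to isomorphism, on the choice of such a surjection (any two differ by an automorphism of $\ff$, by the lifting argument of \S\ref{subsec:Omega}) lets us take this kernel to be $\fr$. Since $[N,M]$ corresponds to $[\mathfrak n,\mathfrak m]$, it follows that $E^\circ\leftrightarrow\ff/[\ff,\fr]=\fe$ and $V^\circ=\bar R/[\Gamma,\bar R]\leftrightarrow\fv$.

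Finally, because $\gamma_n$ corresponds to $\gamma_n$, the identification $W_n/\mho_1(E)\cong\gamma_n(E)\cap C(V)\cong\gamma_n(E^\circ)\cap V^\circ$ from part (2) transports the depth filtration of $C(V)$ to the degree filtration of $\fv$.

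I expect the main obstacle to be the regularity step, $[g,h]^p=1$ and $(gh)^p=g^ph^p$. This is where class $\le p-1$ is genuinely used. If quoting regularity is not wanted, I would instead derive it by induction on the class, expanding the power of a product via Hall--Petrescu in $\langle g,[g,h]\rangle$.
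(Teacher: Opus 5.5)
Your proposal is correct and follows the paper's proof essentially step by step: the power map via Hall--Petrescu, the abelianization $\bar E'\cong(C_{p^2})^d$ for the splitting $V=C(V)\oplus\mho_1(E)$, Jennings' formula for the flag, and $E^\circ=F^\circ/[F^\circ,\bar R]$ transported through the Lazard correspondence. The only differences are minor. You quote Hall's theory of regular $p$-groups to get $[g,h]^p=1$ and $\exp\gamma_2(E)=p$, where the paper proves $\mho_1(\gamma_k(E))=1$ by a downward induction with Hall--Petrescu. You also state explicitly that $\fe$ and $\fv$ do not depend on the choice of the surjection $\ff\to\fl$, which the paper leaves implicit.
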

\begin{proof}
(1) As $E/V=H$ has class $c$, we get $\gamma_{c+1}(E)\le V$ central and $E$
of class $\le c+1<p$; also $g^p\in V$ for all $g$, so $\mho_1(E)\le V$ is
central and elementary abelian. Hall--Petrescu \cite[Thm.~9.4]{Khu98} gives,
for $a,b\in E$,
\begin{equation}\label{eq:HP}
[a^p,b]=[a,b]^p\prod_{k=2}^{p}w_k^{\binom pk},\qquad
w_k\in\gamma_k(\langle a,[a,b]\rangle),
\end{equation}
where the left-hand side is $1$ ($a^p$ being central) and
$w_p\in\gamma_p(E)=1$. We show $\mho_1(\gamma_k(E))=1$ by downward induction
on $k\ge2$: for $a\in\gamma_{k-1}$ and $b\in E$, all the $w_k$ in
\eqref{eq:HP} lie in $\gamma_{k+1}(E)$, whose $p$-th powers are trivial by
induction, so $[a,b]^p=1$; and for $g,h\in\gamma_k$ one has
$(gh)^p=g^ph^p\prod w_k^{\binom pk}$ with $w_k\in\gamma_{2k}\subseteq
\gamma_{k+1}$, so $\gamma_k$ has exponent $p$. The same expansion for
$g,h\in E$, with $w_k\in\gamma_2(E)$ of exponent $p$ and $p\mid\binom pk$ for
$k<p$, shows that $g\mapsto g^p$ is a homomorphism; it kills $\gamma_2(E)$
and $\mho_1(E)$, hence factors through $H/\Phi(H)$, and its image is spanned
by the $x_i^p$. Since $R\gamma_2(F)=F^p\gamma_2(F)$ and the exponent vectors
of $L$ form $p^2\Z^d$, we get $\bar E'\cong(C_{p^2})^d$, in which the
$\bar x_i^p$ are independent; so the $x_i^p$ are independent, $\varphi$ is
injective on $\mho_1(E)$, and $V=C(V)\oplus\mho_1(E)$.

(2) By \eqref{eq:jennings}, for $n\le p$ the relevant factors are
$\gamma_n(E)$ and $\mho_1(\gamma_1)=\mho_1(E)$, the others being trivial by
(1); for $n=p+1$ all factors are trivial. Since $\mho_1(E)\le V$ we get
$W_n=(V\cap\gamma_n(E))\mho_1(E)$ for $n\le p$; if $v\in C(V)\cap W_n$, write
$v=gu$ with $g\in V\cap\gamma_n$ and $u\in\mho_1(E)$, so that $g\in C(V)$ and
$u\in C(V)\cap\mho_1(E)=0$. Finally $\gamma_p(E)=1$.

(3) $E^\circ=F/[F,R]R^pF^p=F^\circ/[F^\circ,\bar R]$, where
$F^\circ=F/\gamma_{c+2}(F)F^p$ is relatively free of exponent $p$ and class
$c+1$ (note $\gamma_{c+2}(F)\le[F,\gamma_{c+1}(F)]\le[F,R]$ and
$R^p\le F^p$) and $\bar R$ is the image of $R$. By \S\ref{subsec:lazard},
$F^\circ\leftrightarrow\ff$, $\bar R\leftrightarrow\fr$ and
$[F^\circ,\bar R]\leftrightarrow[\ff,\fr]$, whence
$E^\circ\leftrightarrow\fe$ and
$V^\circ=\bar R/[F^\circ,\bar R]\leftrightarrow\fv$, with $\gamma_n$
corresponding to $\gamma_n$; by (2) the depth filtration of
$C(V)\cong V^\circ$ is $C(V)\cap\gamma_n(E)$.
\end{proof}

The next proposition is the only place where we use the abelian quotient of
the universal extension. It is a Frobenius argument: the shadow of a
displacement in $\bar E'$ is a $p$-th power of something already deep, hence
trivial.

\begin{proposition}[displacements are commutators]\label{prop:absorb}
For every $\alpha\in\UAut(KH)$ and every $v\in V$ one has
$v^\alpha v^{-1}\in C(V)$. Consequently $U(H)$ acts trivially on $V/C(V)$
and on $\mho_1(E)$, and $U(H)\to GL(C(V))$ is injective.
\end{proposition}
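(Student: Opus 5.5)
We work with the abelianization $\varphi:E\to\bar E'=E/[E,E]\cong(C_{p^2})^d$ from Proposition~\ref{prop:E}(1). Since $C(V)=\ker(\varphi|_V)$, the claim $v^\alpha v^{-1}\in C(V)$ is equivalent to $\varphi(v^\alpha)=\varphi(v)$. The strategy is to show that the displacement $v^\alpha v^{-1}$ lies deep in the depth flag, at depth $p$, and that anything in $V$ at that depth whose image survives in $\bar E'$ is a $p$-th power of something that $\hat\alpha$ fixes modulo $\Phi(E)$. That is the Frobenius remark before the statement.

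\textbf{First step: decompose and handle the commutator part.} By Proposition~\ref{prop:E}(1) we have $V=C(V)\oplus\mho_1(E)$, and the displacement map $v\mapsto v^\alpha v^{-1}$ is a homomorphism $V\to V$ because $V$ is abelian. So it suffices to treat the two summands separately.

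For $v\in C(V)$, Proposition~\ref{prop:E}(2) gives $v\in W_2$. I will not rely on the flag alone here. Instead I use that $\hat\alpha$ is an algebra automorphism of $s$ preserving $\{V-1\}$. It should therefore induce an automorphism of the group $1+\I(s)$, or at least of a subgroup containing $E$, which respects group commutators. If $v=\prod[g_i,h_i]\in V\cap[E,E]$, then $v^\alpha=\prod[\hat\alpha g_i,\hat\alpha h_i]$ is a commutator in the unit group of $s$.

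To conclude from this I need a group-theoretic description of $E$'s image. The cleanest route is to pass to the quotient $s\to K\bar E'$-type small ring: $\varphi$ induces $s(E,V)\to s(\bar E',\varphi(V))$, a commutative algebra. There $\hat\alpha$ descends, provided the kernel, generated by $\{[E,E]\cap V-1\}$ and commutators, is $\hat\alpha$-stable. Commutators of units map to $1$ in the commutative quotient, so $\varphi(v^\alpha)=1$. I expect checking this compatibility of ideals to be the main obstacle. I would first try to show that the ideal of $s$ generated by all $ab-ba$ is $\hat\alpha$-stable, which is automatic, and that its intersection with $\{V-1\}$ is exactly $\{C(V)-1\}$. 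The latter follows from \eqref{eq:PS}-type recognition applied to the abelianized small group ring.

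\textbf{Second step: the power part.} For $u=x_i^p\in\mho_1(E)$, note that $\hat\alpha$ acts on $E$ modulo $\I(s)^2$ trivially, since $(\hat\alpha-1)\I(s)\subseteq\I(s)^2$. Hence $\hat\alpha(\hat x_i)=\hat x_i+\epsilon$ with $\epsilon\in\I(s)^2$. Take $p$-th powers in characteristic $p$: $(\hat x_i+\epsilon)^p$ differs from $\hat x_i^p+\epsilon^p$ by Lie commutator terms. In the commutative quotient above, these vanish and $\epsilon^p\in\I^{2p}=0$, because $D_{p+1}=1$ and the truncation \eqref{eq:trunc0} kills high powers. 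Thus $\varphi(u^\alpha)=\varphi(u)$. Equivalently, by Lemma~\ref{lem:reform}(1) the displacement lies in $W_{p+1}=1$ once we know $u\in W_p$ and that $\hat\alpha$ raises depth by at least $p-1$ there. I would use the direct computation and fall back on the flag argument.

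\textbf{Consequences.} The displacement lands in $C(V)$, so $U(H)$ acts trivially on $V/C(V)$. For $u\in\mho_1(E)=W_p$, Lemma~\ref{lem:reform}(1) puts $u^\alpha u^{-1}\in W_p\cap C(V)$, which is $0$ by Proposition~\ref{prop:E}(2), so $U(H)$ fixes $\mho_1(E)$. Finally, an element of $U(H)$ is determined by its action on $V=C(V)\oplus\mho_1(E)$. Since it is trivial on $\mho_1(E)$, it is determined by its restriction to $C(V)$, which gives injectivity of $U(H)\to GL(C(V))$.
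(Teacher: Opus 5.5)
Your argument is correct, but it is organized differently from the paper's. The paper treats an arbitrary $v$, the class of a relator $r$, in one computation. It pushes $v^\alpha=r(u_1,\dots,u_d)$ through $\psi:s(E,V)\to A=s(\bar E',\bar V')$ and evaluates $r$ by exponent sums. Using $p\mid e_i(r)$ and Frobenius it gets $\varphi(v^\alpha v^{-1})=\prod_i(1+\sigma_i^{\,p})^{f_i}$, and it kills this with \eqref{eq:PS} in $A$ and $D_{2p}(\bar E')=1$. The trivial action on $\mho_1(E)$ is then a separate appeal to Lemma~\ref{lem:reform}(1).

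You instead split $V=C(V)\oplus\mho_1(E)$ first and use that the displacement is a homomorphism. On $C(V)$ you need only that $\hat\alpha$ is multiplicative on $s^\times$. Then $v^\alpha=\hat\alpha(v)$ is a product of commutators of units, which $\psi$ sends to $1$. Since $\psi|_V=\varphi|_V$ and $\bar V'\to A$ is injective, $v^\alpha\in C(V)$. This is the case $e_i(r)=0$ of the paper's formula, seen without relators. On $\mho_1(E)=W_p$, Lemma~\ref{lem:reform}(1) and $W_{p+1}=1$ give $u^\alpha=u$ outright, so no Frobenius computation is needed.

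Your route is shorter and uses only facts the paper proves anyway. What the paper's route buys is that it gets $v^\alpha v^{-1}\in C(V)$ without the depth-flag identities $W_p=\mho_1(E)$ and $W_{p+1}=1$, hence without the class bound. It needs only $p\mid e_i(r)$ and $\bar E'\cong(C_{p^2})^d$, which is the Frobenius mechanism the paper wants to display.

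Two things in your write-up should be fixed.

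\textbf{The ``main obstacle''.} It is not one. You never need $\hat\alpha$ to descend to $A$; you only apply the algebra map $\psi$ to the element $\hat\alpha(v)$. (It does descend anyway, since $\ker\psi$ is the commutator ideal of $s$.)

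\textbf{The direct computation for $u=x_i^p$.} The claim $\I^{2p}=0$ is false once $d\ge3$. With $t_i=\bar x_i-1$ one has $A=K[t_1,\dots,t_d]/(t_i^pt_j)$, and there $t_1^{p-1}t_2^{p-1}t_3^2\ne0$. What does hold is $\sigma^p=0$ for every $\sigma\in\I(A)^2$: by Frobenius it suffices that $m^p$ lies in the ideal $(t_i^pt_j)$ for every monomial $m$ of degree $\ge2$, which it does. Alternatively, argue via \eqref{eq:PS} as the paper does. Either way, that computation only gives $u^\alpha u^{-1}\in C(V)$. It is your fallback through Lemma~\ref{lem:reform}(1) that gives the trivial action on $\mho_1(E)$ and carries the proof, and that lemma needs a shift of one step, not of $p-1$.
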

\begin{proof}
Let $\bar V'=\varphi(V)$ and $A:=s(\bar E',\bar V')=K\bar E'/\I(K\bar V')
\I(K\bar E')$. Every element of $\bar V'$ is a $p$-th power in $\bar E'$,
because relators have exponent sums divisible by $p$; so
$\bar V'\le\mho_1(\bar E')=\Phi(\bar E')$ and
$(\bar E',\bar V')\in\mathcal F(H^{\mathrm{ab}})$, whence \eqref{eq:PS} holds
in $A$. Moreover $\varphi$ carries the generators $(v-1)(g-1)$ of
$\I(KV)\I(KE)$ to generators of the corresponding ideal over $\bar E'$, so it
induces an algebra morphism $\sh:s(E,V)\to A$ with
$\sh|_V=\varphi|_V$.

Let $\hat\alpha$ be a lift of $\alpha$, put
$\rho_i=\hat\alpha(\hat x_i)-\hat x_i$, which lies in $\I(s)^2$
\cite[Lem.~5.2]{HS07}, and $\sigma_i=\sh(\hat x_i)^{-1}\sh(\rho_i)\in
\I(A)^2$. Let $v\in V$ be the class of a relator $r$, so that
$v=r(\hat x_1,\dots,\hat x_d)$ and
$v^\alpha=r(u_1,\dots,u_d)$ with $u_i=\hat x_i(1+\hat x_i^{-1}\rho_i)$.
Applying $\sh$ and evaluating the word $r$ in the commutative algebra $A$ by
exponent sums,
\[
\sh(v^\alpha)=\prod_i\sh(\hat x_i)^{e_i(r)}\prod_i(1+\sigma_i)^{e_i(r)}
=\sh(v)\prod_i(1+\sigma_i)^{e_i(r)} .
\]
Since $\{V-1\}$ is a square-zero central ideal, $\delta:=v^\alpha v^{-1}$
satisfies $\varphi(\delta)=\sh(v^\alpha)\sh(v)^{-1}=\prod_i(1+\sigma_i)
^{e_i(r)}$. Now $H$ has exponent $p$, so $R\gamma_2(F)=F^p\gamma_2(F)$ and
every $e_i(r)$ is divisible by $p$, say $e_i(r)=pf_i$; as
$(1+\sigma)^p=1+\sigma^p$ in a commutative $\Fp$-algebra,
\[
\varphi(\delta)=\prod_i\bigl(1+\sigma_i^{\,p}\bigr)^{f_i},
\]
negative exponents being harmless since these are units. Hence
$\varphi(\delta)-1$ lies in the ideal generated by the
$\sigma_i^{\,p}\in\I(A)^{2p}$, so by \eqref{eq:PS} in $A$ we get
$\varphi(\delta)\in\bar V'\cap D_{2p}(\bar E')$. But
$\bar E'\cong(C_{p^2})^d$ by Proposition~\ref{prop:E}(1), and
$D_{2p}(\bar E')=\mho_2(\bar E')=1$ for an abelian group, since
$p<2p\le p^2$. Therefore $\varphi(\delta)=1$, that is
$\delta\in\ker(\varphi|_V)=C(V)$. Finally a source $v\in\mho_1(E)=W_p$ has
$v^\alpha v^{-1}\in W_{p+1}=1$ by Lemma~\ref{lem:reform}(1), and
$V=C(V)\oplus\mho_1(E)$.
\end{proof}

\begin{theorem}[dictionary]\label{thm:dict}
Let $p$ be odd and $H$ of exponent $p$ and class $c\le p-2$. Under the
identifications of Proposition~\ref{prop:E}(3):
\begin{enumerate}[label=\textup{(\arabic*)}]
\item $C(V)\cong\fv\cong H_2(\fl(H))$, the depth flag corresponding to the
degree filtration; in particular the Schur multiplier $M(H)$ satisfies
$\dim_{\Fp}M(H)/pM(H)=\dim H_2(\fl(H))$, and
$\dim H^2(H,\Fp)=\dim H_2(\fl(H))+d$;
\item for $\beta\in\Aut(H)$ with Lazard image $\theta\in\Aut(\fl)$, the
action $\rho(\beta)$ on $C(V)$ is the action of $\theta$ on $\fv$ of
Lemma~\ref{lem:Omega}\textup{(2)}; if $\theta=\exp(\mu)$ with
$\mu\in\Der(\fl)$ nilpotent, then $\rho(\beta)|_{C(V)}=\exp(\Om(\mu))$.
\end{enumerate}
\end{theorem}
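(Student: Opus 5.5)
Part (1) is essentially a restatement of Proposition~\ref{prop:E}(3) together with Proposition~\ref{prop:E}(2). The inclusion $C(V)\hookrightarrow V\to V/\mho_1(E)=V^\circ$ is an isomorphism because $V=C(V)\oplus\mho_1(E)$. Under Lazard, $V^\circ\leftrightarrow\fv$, and $\fv\cong H_2(\fl)$ by Hopf's formula (\S\ref{subsec:Omega}). Since $V^\circ$ is central of exponent $p$, the Lazard correspondence on an abelian subgroup is the identity on underlying sets, so this is a linear isomorphism. The filtration statement comes from Proposition~\ref{prop:E}(2): there $W_n\cap C(V)=\gamma_n(E)\cap C(V)$, and this maps to $\gamma_n(E^\circ)\cap V^\circ\leftrightarrow\gamma_n(\fe)\cap\fv$. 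One checks that the projection $C(V)\to V^\circ$ carries $\gamma_n(E)\cap C(V)$ onto $\gamma_n(E^\circ)\cap V^\circ$. Injectivity is clear. For surjectivity, lift an element of $\gamma_n(E^\circ)\cap V^\circ$ to $g\in\gamma_n(E)$ with $g\in V\mho_1(E)=V$; then $g\in\gamma_2(E)\cap V$, so $g\in C(V)$ (using $n\ge2$).

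For the homological numerology I would use the standard universal coefficient description. Write $M(H)=R\cap[F,F]/[F,R]$; then $V/C(V)\cong R/(R\cap[F,F])R^p$, which has dimension $d$ because $R[F,F]=F^p[F,F]$. Similarly $C(V)=(R\cap[F,F])R^p/[F,R]R^p$, and this is $M(H)/pM(H)$ provided $R^p\cap[F,F]\subseteq[F,R](R\cap[F,F])^p$. I expect this to be the one fiddly point, since $R^p$ meets $[F,F]$ only in $[F,R]$ up to $p$-th powers of commutator-part elements. The cleaner route is to bypass the computation by quoting $H^2(H,\Fp)\cong\Hom(V,\Fp)$ for the universal central Frattini extension (Hertweck--Soriano / Gaschütz). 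That gives $\dim H^2=\dim V=\dim C(V)+d$. The universal coefficient theorem then gives $\dim H^2(H,\Fp)=d+\dim M(H)/pM(H)$, and comparing the two yields the Schur multiplier statement.

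For part (2), take $\beta\in\Aut(H)$ and lift it to an automorphism $\tilde\beta$ of $F$. This preserves $R$ and $L$, so it induces an automorphism of $E$ that extends to $s(E,V)$ and is a lift of $\beta$ as an algebra automorphism. By Lemma~\ref{lem:welldef} its restriction to $V$ is $\rho(\beta)$. This automorphism of $E$ preserves $\mho_1(E)$ and $C(V)$, so it descends to $E^\circ$ preserving $V^\circ$; there the Lazard functor turns it into an automorphism of $\fe$ preserving $\fv$ and inducing $\theta$ on $\fl$. Lemma~\ref{lem:Omega}(1) identifies its restriction to $\fv$ with the canonical action of $\theta$. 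Because $C(V)\cong V^\circ$ equivariantly, the first claim follows. For the exponential claim, lift $\mu$ to a derivation $\tilde\mu$ of $\fe$ preserving $\fv$. Then $\tilde\mu$ is nilpotent, since $\fe$ is nilpotent and $\tilde\mu$ is nilpotent modulo the central $\fv$ with $\tilde\mu(\fv)\subseteq\fv$; if needed, reduce to $\Om(\mu)$ nilpotent, which holds for the $\mu$ of interest. Also $\exp$ makes sense since the class is at most $c+1<p$. So $\exp(\tilde\mu)$ is an automorphism of $\fe$ lifting $\exp(\mu)=\theta$, and Lemma~\ref{lem:Omega}(1),(4) give $\exp(\Om(\mu))$ on $\fv$. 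The main obstacle I expect is the nilpotency of the lift, needed for $\exp$ to be defined with denominators below $p$. If necessary, I would restrict to $\mu\in\Der_{>0}$, where degree-raising nilpotency is automatic.
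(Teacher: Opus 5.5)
Your proof of $C(V)\cong V^\circ\cong\fv\cong H_2(\fl)$, of the compatibility of the filtrations, and of the first clause of (2) is the paper's argument: Proposition~\ref{prop:E}(3), Hopf's formula, and a lift of $\beta$ to $F$ that descends to $E^\circ$ and is fed into Lemma~\ref{lem:Omega}(1).

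\textbf{The Schur multiplier count.} Here your route genuinely differs from the paper's. The paper observes that $R/(R\cap F')\cong RF'/F'\le F/F'$ is free abelian. Hence $R/[F,R]\cong M(H)\oplus\Z^d$, and tensoring with $\Fp$ gives $V$. The subgroup $C(V)$, being the image of $R\cap F'$, is the image of the summand $M(H)$, namely $M(H)/pM(H)$. This also settles in one line the inclusion $F'\cap R^p\subseteq[F,R](R\cap F')^p$ that you found fiddly. Modulo $[F,R]$, a $p$-th power $(pm,pt)$ lying in $M(H)$ has $pt=0$, hence $t=0$.

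Your fallback is also valid. It combines the transgression isomorphism $H^2(H,\Fp)\cong\Hom(V,\Fp)$ with the universal coefficient theorem and $\dim V=\dim C(V)+d$. The transgression isomorphism comes from the five-term sequence, using $H^2(F,\Fp)=0$ and the fact that inflation $H^1(H,\Fp)\to H^1(F,\Fp)$ is bijective for a minimal presentation. This route gives only an equality of dimensions, whereas the splitting gives a canonical isomorphism $C(V)\cong M(H)/pM(H)$. The statement asks only for dimensions, so either suffices.

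\textbf{The exponential clause.} Restricting to $\Der_{>0}(\fl)$ is necessary, not just convenient. Your justification that $\exp$ makes sense ``since the class is at most $c+1<p$'' is valid only in that case. The class bounds $\tilde\mu$ only when $\tilde\mu$ raises the lower central filtration. Then $\tilde\mu^{c+1}=0$, and only factorials of integers $\le c$ appear in the Leibniz expansion, as in the proof of Theorem~\ref{thm:main}.

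For general nilpotent $\mu$ the clause is false. Take $p=5$ and $H=(C_5)^5$. Then $\fl=\Fp^5$ is abelian, $\fe=\fl\oplus\Lambda^2\fl$, $\fv=\Lambda^2\fl$, and $\Om(\mu)=\mu\wedge1+1\wedge\mu$. Let $\mu e_i=e_{i+1}$ and $\mu e_5=0$.
\begin{itemize}
\item We have $\mu^5=0$, and also $\Om(\mu)^5=0$ because $(X+Y)^p=X^p+Y^p$ for commuting $X,Y$.
\item The coefficient of $e_3\wedge e_5$ in $\exp(\mu)e_1\wedge\exp(\mu)e_2$ is $\frac1{2!\,3!}-\frac1{4!}=\frac1{24}\neq0$.
\item In $\exp(\Om(\mu))(e_1\wedge e_2)$ that coefficient is $0$, since producing it would need $\Om(\mu)^5$.
\end{itemize}
In this example the lift $\tilde\mu=\mu\oplus\Om(\mu)$ satisfies $\tilde\mu^5=0$. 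So the obstruction is not the nilpotency of the lift, as you anticipated. It is that the truncated exponential of a nilpotent derivation need not be a Lie automorphism in characteristic $p$.

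The paper's one-line appeal to Lemma~\ref{lem:Omega}(4) tacitly assumes $\exp(\tilde\mu)\in\Aut(\fe)$, and that fails here. The assumption is verified only for $\mu\in\Der_{>0}(\fl)$, which is the sole case used in Theorems~\ref{thm:main} and~\ref{thm:formula}. With that restriction stated explicitly, your proof is complete.
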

\begin{proof}
(1) The first assertion is Proposition~\ref{prop:E}(3) and Hopf's formula.
Since $R/(R\cap F')$ is free abelian, $R/[F,R]\cong M(H)\oplus\Z^d$, so
$V=(R/[F,R])\otimes\Fp$ has dimension $\dim H^2(H,\Fp)$ and $C(V)$, the image
of $R\cap F'$, is $M(H)/pM(H)$. (2) Lift $\beta$ to $F$; the induced
automorphism of $E$ descends to $E^\circ$ and corresponds to an automorphism
of $\fe$ preserving $\fv$ and inducing $\theta$, so
Lemma~\ref{lem:Omega}(1),(4) apply.
\end{proof}

\section{The logarithm and the main theorem}\label{sec:log}

\subsection{The logarithm of a unipotent automorphism}

\begin{lemma}\label{lem:log}
Let $A$ be a finite-dimensional $\Fp$-algebra with an ideal $I$ such that
$A=\Fp1\oplus I$ and $I^p=0$, and let $\alpha\in\Aut(A)$ satisfy
$(\alpha-1)(I)\subseteq I^2$. Put $N=\alpha-1$ and
$\Delta=\sum_{k=1}^{p-2}\frac{(-1)^{k+1}}{k}N^k$. Then $N^{p-1}=0$, the map
$\Delta$ is a derivation of $A$ with $\Delta(I^n)\subseteq I^{n+1}$, and
$\alpha=\exp\Delta:=\sum_{k=0}^{p-2}\Delta^k/k!$.
\end{lemma}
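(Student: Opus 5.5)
The plan is to do everything with truncated formal power series over $\Z_{(p)}$ and reduce mod $p$, using the filtration by powers of $I$ to justify every truncation. Throughout, the only denominators that appear are $k$ or $k!$ with $k\le p-2$, so they are invertible mod $p$.

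\emph{Nilpotency and the filtration.} Since $\alpha$ is unital we have $N(1)=0$. Applying the hypothesis $(\alpha-1)(I)\subseteq I^2$ to products, and using multiplicativity of $\alpha$ (writing $\alpha(ab)-ab=N(a)\alpha(b)+aN(b)$), one gets $N(I^n)\subseteq I^{n+1}$ for all $n\ge1$ by induction. Hence $N^k(A)=N^k(I)\subseteq I^{k+1}$ for $k\ge1$, and so $N^{p-1}(A)\subseteq I^p=0$. The same holds for $\Delta$: it is a polynomial in $N$ without constant term, so $\Delta(I^n)\subseteq I^{n+1}$, $\Delta(1)=0$, and $\Delta^{p-1}=0$.

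\emph{The identity $\alpha=\exp\Delta$.} In $\Z_{(p)}[x]/(x^{p-1})$ the truncated series $\ell(x)=\sum_{k=1}^{p-2}(-1)^{k+1}x^k/k$ and $e(y)=\sum_{k=0}^{p-2}y^k/k!$ have $p$-integral coefficients. Moreover $e(\ell(x))=1+x$ there, because this is the classical identity over $\mathbb Q$ and nothing of degree $\ge p-1$ is involved. Reducing mod $p$ and substituting $x=N$, which is legitimate since $N^{p-1}=0$, gives $\exp\Delta=1+N=\alpha$.

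\emph{The derivation property, which I expect to be the main step.} Let $m:A\otimes A\to A$ be the multiplication. Multiplicativity of $\alpha$ reads $\alpha\circ m=m\circ(\alpha\otimes\alpha)$, i.e.\ $N\circ m=m\circ M$ with
\[
M=X+Y+XY,\qquad X=N\otimes1,\quad Y=1\otimes N .
\]
Here $X$ and $Y$ commute. Iterating gives $N^k\circ m=m\circ M^k$, hence $\Delta\circ m=m\circ\ell(M)$, where $\ell(M)$ is the polynomial $\ell(X+Y+XY)$ in the commuting variables $X,Y$.

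Over $\Z_{(p)}[[x,y]]$ modulo the ideal $J$ of series of total degree $\ge p-1$, we have the identity
\[
\ell(x+y+xy)\equiv\ell(x)+\ell(y),
\]
which is $\log((1+x)(1+y))=\log(1+x)+\log(1+y)$ truncated. All coefficients of monomials of degree $\le p-2$ on both sides are $p$-integral, since only $1/k$ with $k\le p-2$ enter. So the identity holds in $\Fp[x,y]/J$.

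It remains to see that $J$ acts as zero after composing with $m$, on the relevant inputs. If $a\in\Fp1$ or $b\in\Fp1$, the Leibniz rule is trivial because $\Delta(1)=0$. For $a\in I^i$ and $b\in I^j$ with $i,j\ge1$, a monomial $X^rY^s$ with $r+s\ge p-1$ sends $a\otimes b$ into $I^{i+r}\otimes I^{j+s}$, and $m$ sends this into $I^{i+j+r+s}\subseteq I^{p+1}=0$. Therefore
\[
\Delta(ab)=m\bigl(\ell(X)+\ell(Y)\bigr)(a\otimes b)=\Delta(a)\,b+a\,\Delta(b).
\]

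The delicate point is exactly this bookkeeping. One must check that the truncation at $k=p-2$ loses nothing on $I\otimes I$, namely that $m\circ M^k$ vanishes there for $k\ge p-2$, which follows from the same degree count. One must also check that the two-variable logarithm identity involves no $p$ in a denominator below total degree $p-1$. I would verify both by the degree estimate above rather than by expanding coefficients.
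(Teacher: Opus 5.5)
Your proposal is correct and follows essentially the same route as the paper: truncated $\exp$/$\log$ identities over $\Z_{(p)}$ give $\alpha=\exp\Delta$, and a tensor-square argument with degree counting gives the Leibniz rule. The only difference is one of packaging. The paper sets $X=\Delta\otimes1$, $Y=1\otimes\Delta$ on the quotient of $I\otimes I$ by $\sum_{i+j\ge p}I^i\otimes I^j$, proves $\alpha\otimes\alpha=\exp(X+Y)$ there, and then takes a logarithm. You instead take $X=N\otimes1$ and use the additivity $\ell(x+y+xy)\equiv\ell(x)+\ell(y)$ directly, killing the error terms only after applying $m$. This is slightly more direct, and it makes your extra check that $m\circ M^k$ vanishes for $k\ge p-2$ unnecessary, though that check is true.
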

\begin{proof}
First, $N(I^n)\subseteq I^{n+1}$: write an element of $I^n$ as a sum of
products of $n$ elements of $I$ and expand $\alpha(a_1\cdots a_n)-
a_1\cdots a_n$ telescopically, each summand containing a factor
$N(a_j)\in I^2$. Hence $N^{p-1}(I)\subseteq I^p=0$, and $N(1)=0$. In
$\Z_{(p)}[t]/(t^{p-1})$ the truncated identities $\exp(\log(1+t))=1+t$ and
$\log(\exp t)=t$ hold, all denominators being prime to $p$; substituting
$t\mapsto N$ gives $\alpha=\exp\Delta$ and $\Delta^{p-1}=0$, as well as
$\Delta(I^n)\subseteq I^{n+1}$.

For the Leibniz rule, let $Q$ be the quotient of $I\otimes I$ by the subspace
$\sum_{i+j\ge p}I^i\otimes I^j$. The multiplication $m:I\otimes I\to A$
factors through $Q$, and the commuting operators $X=\Delta\otimes1$,
$Y=1\otimes\Delta$ act on $Q$ raising the total degree by one, so that every
monomial $X^aY^b$ with $a+b\ge p-1$ vanishes on $Q$. Hence, on $Q$,
\[
\alpha\otimes\alpha=\exp(X)\exp(Y)=\sum_{a+b\le p-2}\frac{X^aY^b}{a!\,b!}
=\exp(X+Y),
\]
the last equality because $\exp(X)\exp(Y)=\exp(X+Y)$ holds in
$\Z_{(p)}[X,Y]$ modulo total degree $p-1$. Put
$M':=\alpha\otimes\alpha-1$ on $Q$; then $M'=\exp(X+Y)-1$ with
$(X+Y)^{p-1}=0$, so $\log(1+M')=X+Y$ on $Q$. On the other hand
$m\circ M'=N\circ m$, because
$m((\alpha\otimes\alpha)(a\otimes b))=\alpha(ab)$; therefore
$m\circ M'^k=N^k\circ m$ for all $k$ and $m\circ\log(1+M')=\Delta\circ m$.
Thus $\Delta(ab)=m((X+Y)(a\otimes b))=\Delta(a)b+a\Delta(b)$ for
$a,b\in I$, and trivially when $a$ or $b$ is a scalar.
\end{proof}

\subsection{The Lazard algebra inside the small group ring}
Let $p$ be odd, $G$ a group of exponent $p$ and class $<p$, and
$B:=KG/\I^{\,p}$. In $B$ the truncated series $\exp$ and $\log$ are mutually
inverse bijections between $\I(B)$ and $1+\I(B)$, and the
Baker--Campbell--Hausdorff identity $\exp(x)\exp(y)=\exp(\mathrm{BCH}(x,y))$
holds for $x,y\in\I(B)$: these are identities in
$\Z_{(p)}\langle\langle x,y\rangle\rangle$ modulo degree $p$, whose
denominators are prime to $p$. Since $G\cap(1+\I^p)=D_p(G)=1$, the group $G$
embeds in $B^\times$ and $\log:G\to\I(B)$ is injective; its image
$\fl_B:=\log G$ is closed under scalar multiples ($\log g^k=k\log g$) and
under BCH, hence, by the inverse BCH formulas of the Lazard correspondence
\cite[Ch.~10]{Khu98}, under addition and brackets. So $\fl_B$ is a Lie
subalgebra of $B$, and it is the Lazard algebra of $G$: normal subgroups $N$
correspond to the ideals $\log N$, and $\log g\in\I(B)^n$ if and only if
$g\in D_n(G)=\gamma_n(G)$, since $\log g=(g-1)u$ with $u$ a unit. In
particular $\fl_B\cap\I(B)^n=\gamma_n(\fl_B)$.

We apply this to $G=E^\circ$, writing $\fe=\log E^\circ\subseteq B$ and
$\fv=\log V^\circ$, a central ideal contained in $\I(B)^2$. Since
$x^p-1=(x-1)^p\in\I^p$ for $x\in E$, we have $B=KE/\I(KE)^p$, and
\[
s':=s(E,V)/\I(s)^p=B/J_B,\qquad J_B:=\text{the image of }\I(KV)\I(KE).
\]
For $v\in V^\circ$ one has $v-1=\log v\cdot w_v$ with $w_v\in1+\I(B)$, so
$J_B=\fv\cdot\I(B)$; and in $s'$ we get $(v-1)^2=0$ by \eqref{eq:trunc0},
whence $\log v=v-1$ there. For $1\le j\le p-1$ let
$\varepsilon_j:S^j\fe\to B$ be the symmetrization
$\varepsilon_j(u_1\cdots u_j)=\frac1{j!}\sum_{\pi\in S_j}u_{\pi(1)}\cdots
u_{\pi(j)}$; it is $\ad(\fe)$-equivariant, since $\ad_b$ is a derivation of
the associative product and Leibniz on a symmetrized product returns a
symmetrized sum.

\begin{lemma}[symmetrization decomposition]\label{lem:symm}
\begin{enumerate}[label=\textup{(\arabic*)}]
\item $B=\Fp\oplus\fe\oplus\bigoplus_{j=2}^{p-1}\varepsilon_j(S^j\fe)$, every
summand is $\ad(\fe)$-stable, and the decomposition is compatible with the
$\I$-adic filtration: each $\I(B)^n$ is the sum of its intersections with
the summands, and $\I(B)^n\cap\fe=\gamma_n(\fe)$.
\item $J_B=\bigoplus_{j=2}^{p-1}\varepsilon_j(\fv\cdot S^{j-1}\fe)$, a sum of
$\ad(\fe)$-stable subspaces of the summands with $j\ge2$.
\item Consequently $s'=\Fp\oplus\fe\oplus C$ with $C$ $\ad(\fe)$-stable; the
group $E^\circ$ and the algebra $\fe$ embed in $s'$,
$\fe\cap\I(s')^n=\gamma_n(\fe)$, and the projection $p_1:s'\to\fe$ along
$\Fp\oplus C$ satisfies $p_1([u,b])=[p_1(u),b]$ for $u\in s'$, $b\in\fe$, as
well as $p_1(\I(s')^n)\subseteq\gamma_n(\fe)$.
\end{enumerate}
\end{lemma}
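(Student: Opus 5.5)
The plan is to prove (1) by a truncated Poincaré--Birkhoff--Witt argument at the level of associated graded objects, and then to read off (2) and (3) almost formally from the centrality of $\fv$.

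\emph{Step 1: filtering the symmetric powers.} Filter $\fe$ by $\gamma_n(\fe)$. By the discussion preceding the lemma, $\fe\cap\I(B)^n=\gamma_n(\fe)$, and $\gr\fe\cong L(E^\circ)$, since $D_n(E^\circ)=\gamma_n(E^\circ)$ for exponent $p$ and class $<p$. Give $S^j\fe$ the induced weight filtration, in which a product $u_1\cdots u_j$ with $u_i\in\gamma_{n_i}(\fe)$ has weight $\sum n_i$. Then $\varepsilon_j$ maps weight $\ge n$ into $\I(B)^n$, so everything of weight $\ge p$ is killed. Passing to associated graded objects gives a map
\[
\gr\varepsilon:\ \bigoplus_{j<p}S^j(L(E^\circ))_{<p}\longrightarrow \gr B=u(L(E^\circ))/(\text{degree}\ge p),
\]
using Quillen's theorem. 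Here $\gr\varepsilon$ is the symmetrization map of the graded Lie algebra $L(E^\circ)$ into its enveloping algebra.

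\emph{Step 2: the graded PBW statement (main obstacle).} I expect this step to carry the weight of the proof. I would argue as follows. The $p$-map on $L(E^\circ)$ vanishes because $E^\circ$ has exponent $p$, so $u(L)$ has the PBW basis of ordered monomials with exponents $<p$. In total degree $<p$ every monomial has length $<p$, so all exponents are automatically $<p$. In this range $u(L)$ agrees with $U(L)$, whose graded pieces are $\bigoplus_j S^j(L)$ of that degree. Since $j!$ is invertible for $j<p$, the classical argument that symmetrization $S(L)\to U(L)$ is a linear isomorphism goes through verbatim in degrees $<p$: symmetrization agrees with the PBW monomial modulo terms of lower length, and one inducts on length. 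Hence $\gr\varepsilon$ is bijective.

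A standard filtered lemma then finishes (1): a filtration-preserving map of finite-dimensional filtered spaces whose associated graded map is bijective is a strict isomorphism. This yields simultaneously the direct sum decomposition, the injectivity of each $\varepsilon_j$, and the statement that each $\I(B)^n$ is the sum of its intersections with the summands. Stability of each summand under $\ad(\fe)$ is exactly the equivariance of $\varepsilon_j$ noted just before the lemma.

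\emph{Step 3: part (2).} Since $J_B=\fv\cdot\I(B)$ and $\I(B)=\bigoplus_{j\ge1}\varepsilon_j(S^j\fe)$ by (1), it suffices to compute $\fv\cdot\varepsilon_{j-1}(w)$ for $v\in\fv$ and $w\in S^{j-1}\fe$. Because $v$ is central in $B$, averaging over the positions of $v$ gives $v\,\varepsilon_{j-1}(w)=\varepsilon_j(vw)$. Hence $J_B=\sum_{j\ge2}\varepsilon_j(\fv\cdot S^{j-1}\fe)$, and this sum is direct by (1). Each piece is $\ad(\fe)$-stable, since $\ad_b(vx)=v\,\ad_b(x)$ and $\fv$ is an ideal.

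\emph{Step 4: part (3).} Quotienting (1) by (2) gives $s'=\Fp\oplus\fe\oplus C$, where $C=\bigoplus_{j\ge2}\varepsilon_j(S^j\fe)/\varepsilon_j(\fv S^{j-1}\fe)$ is $\ad(\fe)$-stable. Since $J_B$ meets $\fe$ trivially, $\fe$ embeds in $s'$. The filtration compatibility descends, because $J_B$ is compatible with the filtration by (1) and (2); hence $\fe\cap\I(s')^n=\gamma_n(\fe)$ and $p_1(\I(s')^n)\subseteq\gamma_n(\fe)$. The identity $p_1([u,b])=[p_1(u),b]$ holds because every summand is $\ad(b)$-stable.

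Finally, $E^\circ$ embeds in $s'$ by the following argument. Each $g\in E^\circ$ equals $\exp(\log g)$ with $\log g\in\fe$. In $s'$ we have $\I(s')^p=0$, so $\exp$ and $\log$ are mutually inverse bijections between $\I(s')$ and $1+\I(s')$. Therefore equal images of $g$ and $h$ force equal images of $\log g$ and $\log h$, and injectivity of $\fe\to s'$ gives $g=h$.
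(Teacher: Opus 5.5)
Your proposal is correct and follows essentially the same route as the paper: a filtered symmetrization map whose associated graded is the truncated PBW symmetrization $\bigoplus_{j<p}S^jL(E^\circ)\to u(L(E^\circ))/(\deg\ge p)$, hence a strict filtered isomorphism, followed by the centrality identity $v\,\varepsilon_{j-1}(w)=\varepsilon_j(vw)$ for (2) and descent to $s'$ for (3). Two small points to tidy: in Step~3, the top case $j-1=p-1$ needs separate treatment, since $\varepsilon_p$ is undefined, but there $\fv\cdot\varepsilon_{p-1}(S^{p-1}\fe)\subseteq\I(B)^{p+1}=0$, as the paper notes; and ``injectivity of each $\varepsilon_j$'' should read injectivity modulo weight $\ge p$.
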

\begin{proof}
(1) Filter $S^j\fe$ by total degree, induced from the lower central
filtration of $\fe$, and let
$\varepsilon:\bigoplus_{j\le p-1}S^j\fe\to B$ be $\sum_j\varepsilon_j$, a
filtered map. Elements of degree $\ge p$ map into $\I(B)^p=0$, so
$\varepsilon$ factors through the quotient by degree $\ge p$. On associated
graded objects $\gr\varepsilon$ is the symmetrization
$\bigoplus_{j<p}S^jL\to u(L)/(\deg\ge p)$ with $L=L(E^\circ)$. By the
restricted Poincar\'e--Birkhoff--Witt theorem \cite{Jac62} the length
filtration of $u(L)$ has associated graded $S(L)/(x^p)$, which agrees with
$S^jL$ in lengths $j<p$, and symmetrization is a section of
$U^{(j)}\to U^{(j)}/U^{(j-1)}$; hence $\bigoplus_{j<p}S^jL\to u(L)$ is
injective, and it is surjective in degrees $<p$ because a PBW monomial of
length $\ell$ in a graded basis of $L$ has degree $\ge\ell$. So
$\gr\varepsilon$ is an isomorphism and $\varepsilon$ is a strict isomorphism
of filtered spaces, which gives (1); the summand $j=1$ is $\fe$, and
$\I(B)^n\cap\fe=\gamma_n(\fe)$ was noted above.

(2) $J_B=\fv\cdot\I(B)=\sum_{j\ge1}\fv\cdot\varepsilon_j(S^j\fe)$. For
$z\in\fv$ central and $j+1\le p-1$ one has
$z\,\varepsilon_j(u_1\cdots u_j)=\varepsilon_{j+1}(zu_1\cdots u_j)$, since
$z$ may be inserted in any of the $j+1$ positions; and for $j=p-1$ the
product lies in $\I^2\cdot\I^{p-1}=0$. Stability holds because $\fv$ is an
ideal of $\fe$.

(3) As $s'=B/J_B$ and $J_B$ is a sum of subspaces of the summands with
$j\ge2$, the decomposition descends with $\fe$ intact; the filtration
statements descend because $\I(s')^n$ is the image of $\I(B)^n$ and the
decomposition is strict. Equivariance of $p_1$ is the $\ad(\fe)$-stability
of the summands.
\end{proof}

\subsection{The main theorem}

\begin{theorem}\label{thm:main}
Let $p$ be odd and $H$ of exponent $p$ and class $c\le p-2$. For every
$\alpha\in\UAut(KH)$ there is $\mu\in\Der_{>0}(\fl(H))$ such that
$\beta:=\exp(\mu)\in\Aut(H)$ lies in $\UAut(KH)$ and $\rho(\alpha)=
\rho(\beta)$ on $V$.
\end{theorem}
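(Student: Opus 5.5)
The plan is to push $\alpha$ into the truncated small group ring $s'=s(E,V)/\I(s)^p$, take its logarithm, and project onto the Lazard algebra $\fe$. Let $\hat\alpha$ be the lift of $\alpha$ to $s=s(E,V)$ preserving $\{V-1\}$. It preserves every $\I(s)^n$, so it descends to an automorphism $\alpha'$ of $s'$. By \cite[Lem.~5.2]{HS07} we have $(\alpha'-1)\I(s')\subseteq\I(s')^2$. Lemma~\ref{lem:log} applies with $A=s'$ and $I=\I(s')$, since $I^p=0$. It gives a derivation $\Delta$ of $s'$ with $\Delta(\I^n)\subseteq\I^{n+1}$ and $\alpha'=\exp\Delta$.

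\emph{Step 1: a derivation of $\fe$.} Set $\tilde\mu:=p_1\circ\Delta|_{\fe}:\fe\to\fe$, using the embedding $\fe\subseteq s'$ of Lemma~\ref{lem:symm}(3). For $a,b\in\fe$ we have $\Delta[a,b]=[\Delta a,b]+[a,\Delta b]$. The equivariance $p_1([u,b])=[p_1u,b]$ then gives $\tilde\mu[a,b]=[\tilde\mu a,b]+[a,\tilde\mu b]$, so $\tilde\mu\in\Der(\fe)$. Since $p_1(\I^{n+1})\subseteq\gamma_{n+1}(\fe)$, the derivation $\tilde\mu$ raises the lower central filtration. Next I check that $\tilde\mu(\fv)\subseteq\fv$. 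For $z\in\fv$ we have $z=v-1$ in $s'$, and $\hat\alpha$ preserves $\{V-1\}$, so $\alpha'$ maps $\fv\oplus(\{\mho_1(E)-1\}\text{ image})$ to itself. The image of $\{V-1\}$ in $s'$ is $\{V-1\}/(\{V-1\}\cap\I^p)=\{V/W_p-1\}$, which by \eqref{eq:PS} and Proposition~\ref{prop:E}(2) identifies with $\fv$. So $\alpha'$ preserves $\fv$, hence $N=\alpha'-1$ and $\Delta$ (a polynomial in $N$) preserve $\fv$. As $\fv\subseteq\fe$ and $p_1$ is the identity on $\fe$, we get $\tilde\mu|_\fv=\Delta|_\fv$. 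Moreover $N$ is nilpotent on the square-zero-against-$\I$ ideal, so $\exp(\Delta)|_\fv=\alpha'|_\fv$, which is exactly the action $\rho(\alpha)$ on $C(V)\cong V^\circ$.

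\emph{Step 2: descend and compare.} Since $\tilde\mu$ preserves $\fv$, it induces $\mu\in\Der(\fl)$ with $\mu(\fl)\subseteq\gamma_2(\fl)$, so $\mu\in\Der_{>0}(\fl)$. Then $\exp(\mu)$ is defined: $\mu$ is nilpotent of order at most $c<p$. Let $\beta\in\Aut(H)$ be the Lazard counterpart of $\exp(\mu)$. It is the identity modulo $\gamma_2=\Phi(H)$, hence $\beta\in\UAut(KH)$. The derivation $\tilde\mu$ is a lift of $\mu$ to $\fe$ preserving $\fv$, and $\exp(\tilde\mu)$ makes sense. So Lemma~\ref{lem:Omega}(1),(4) and Theorem~\ref{thm:dict}(2) give
\[
\rho(\beta)|_{C(V)}=\exp(\Om(\mu))=\exp(\tilde\mu)|_\fv=\exp(\Delta)|_\fv=\rho(\alpha)|_{C(V)} .
\]
On the complement $\mho_1(E)$, both $\rho(\alpha)$ and $\rho(\beta)$ act trivially by Proposition~\ref{prop:absorb}. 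The same proposition also shows that both act trivially on $V/C(V)$. Therefore $\rho(\alpha)=\rho(\beta)$ on $V=C(V)\oplus\mho_1(E)$. For this step it is cleaner to invoke that $U(H)\to GL(C(V))$ is injective and that $\beta\in\UAut(KH)$.

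\emph{Main obstacle.} I expect the delicate point to be the identification of $\fv\subseteq\fe\subseteq s'$ with the image of $\{V-1\}$ in $s'$, together with its compatibility with $\Delta$. Concretely, I must verify three things:
\begin{enumerate}[label=(\roman*)]
\item the relation $\log v=v-1$ (from \eqref{eq:trunc0});
\item that passing to $s'$ kills exactly $\mho_1(E)=W_p$ and not more of $C(V)$, using $W_p\cap C(V)=0$;
\item that the action of $\alpha'$ on this image agrees with $\rho(\alpha)$ read on $V^\circ\cong C(V)$.
\end{enumerate}
A second point needing care is that $\exp\Delta$ restricted to $\fv$ equals $\exp$ of the restriction. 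This holds because $\fv$ is $\Delta$-stable. One must also make sure that truncating the exponential at degree $p-2$ is harmless on $\fv\subseteq\I^2$. That follows from $\Delta(\I^n)\subseteq\I^{n+1}$ and $\I^p=0$, since $\Delta^{p-2}(\fv)\subseteq\I^{p}=0$.
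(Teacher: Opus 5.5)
Your proposal is correct and follows essentially the same route as the paper: push $\hat\alpha$ down to $s'$, take $\log$ via Lemma~\ref{lem:log}, identify the image of $\{V-1\}$ with $\fv\subseteq\fe$, project with $p_1$ to get a derivation of $\fe$, and compare on $C(V)$ using Theorem~\ref{thm:dict}(2) and Proposition~\ref{prop:absorb}. The one step you pass over is checking that the truncated $\exp(\mu)$ is a Lie automorphism; the paper spells this out (Leibniz plus the binomial identity, with only factorials of integers $\le c$ involved), and it is routine.
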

\begin{proof}
Let $\hat\alpha$ be a lift of $\alpha$ to $s=s(E,V)$; it preserves
$\{V-1\}$ and $\I(s)^p$, hence induces an automorphism of $s'$, still written
$\hat\alpha$, with $(\hat\alpha-1)\I(s')\subseteq\I(s')^2$. By
Lemma~\ref{lem:log}, $D:=\log\hat\alpha$ is a derivation of $s'$ with
$D(\I(s')^n)\subseteq\I(s')^{n+1}$ and $\hat\alpha=\exp D$; being a
polynomial in $\hat\alpha-1$, it preserves the image $\bar V$ of $\{V-1\}$
in $s'$. Now $\bar V=\{v-1:v\in V^\circ\}=\log V^\circ=\fv\subseteq\fe$ by
Lemma~\ref{lem:symm}(3), and $v\mapsto v-1$ is an isomorphism
$C(V)\cong V^\circ\to\fv$ of $\Fp$-spaces intertwining $\rho(\alpha)$ with
$\hat\alpha|_\fv$: indeed $\{v^\alpha-1\}=\hat\alpha\{v-1\}$, and
$v^\alpha\in C(V)$ by Proposition~\ref{prop:absorb}. Hence
\[
\rho(\alpha)|_{C(V)}=\exp\bigl(D|_\fv\bigr).
\]
Put $\tilde\mu:=p_1\circ D|_\fe:\fe\to\fe$. By equivariance of $p_1$,
\[
\tilde\mu[a,b]=p_1\bigl([Da,b]+[a,Db]\bigr)=[\tilde\mu a,b]+[a,\tilde\mu b],
\]
so $\tilde\mu$ is a derivation of $\fe$; it satisfies
$\tilde\mu(\gamma_n\fe)\subseteq p_1(\I(s')^{n+1})\subseteq
\gamma_{n+1}(\fe)$, and $\tilde\mu|_\fv=D|_\fv$ because
$D(\fv)\subseteq\fv\subseteq\fe$ and $p_1$ is the identity on $\fe$. Let
$\mu$ be the derivation of $\fl=\fe/\fv$ induced by $\tilde\mu$; then
$\mu(\fl)\subseteq\gamma_2(\fl)$, that is $\mu\in\Der_{>0}(\fl)$, and
$\tilde\mu^{c+1}=0$ since $\fe$ has class $\le c+1$.

The truncated exponentials $\exp(\mu)=\sum_{k\le c}\mu^k/k!$ and
$\exp(\tilde\mu)$ are therefore defined, and they are Lie algebra
automorphisms: expanding
$\exp(\tilde\mu)[a,b]=\sum_n\frac1{n!}\tilde\mu^n[a,b]$ by Leibniz and the
binomial identity gives $[\exp(\tilde\mu)a,\exp(\tilde\mu)b]$, all the
factorials involved being those of integers $\le c\le p-2$, hence
invertible. By the Lazard correspondence $\beta:=\exp(\mu)\in\Aut(H)$, and
$\beta$ is the identity on $H/\gamma_2(H)$, hence on $\I/\I^2$, so
$\beta\in\UAut(KH)$. By Theorem~\ref{thm:dict}(2),
\[
\rho(\beta)|_{C(V)}=\exp(\Om(\mu))=\exp\bigl(\tilde\mu|_\fv\bigr)
=\exp\bigl(D|_\fv\bigr)=\rho(\alpha)|_{C(V)} ,
\]
and both $\alpha$ and $\beta$ act trivially on $\mho_1(E)$
(Proposition~\ref{prop:absorb}); since $V=C(V)\oplus\mho_1(E)$, they agree
on $V$.
\end{proof}

\begin{theorem}[closed formula]\label{thm:formula}
Let $p$ be odd and $H$ of exponent $p$ and class $c\le p-2$. Then,
identifying $C(V)$ with $\fv=H_2(\fl(H))$,
\[
U(H)=\exp\Bigl(\Om\bigl(\Der_{>0}(\fl(H))\bigr)\Bigr)\ \le\
GL\bigl(H_2(\fl(H))\bigr),\qquad
|U(H)|=p^{\,\dim\Om(\Der_{>0}(\fl(H)))} .
\]
In particular $U(H)=1$ if and only if $\Om(\Der_{>0}(\fl(H)))=0$.
\end{theorem}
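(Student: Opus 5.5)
The plan is to prove the two inclusions separately and then count, working throughout on $C(V)\cong\fv$. By Proposition~\ref{prop:absorb}, $U(H)\to GL(C(V))$ is injective, so we may regard $U(H)$ as a subgroup of $GL(\fv)$ via Theorem~\ref{thm:dict}(1).

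\emph{Inclusion $\subseteq$.} Given $\alpha\in\UAut(KH)$, Theorem~\ref{thm:main} produces $\mu\in\Der_{>0}(\fl)$ with $\rho(\alpha)=\rho(\exp\mu)$. Theorem~\ref{thm:dict}(2) then gives $\rho(\alpha)|_{C(V)}=\exp(\Om(\mu))$.

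\emph{Inclusion $\supseteq$.} Given $\mu\in\Der_{>0}(\fl)$, it is nilpotent of order $\le c$, so $\exp\mu$ is a Lie automorphism; the factorials involved are invertible, as in the proof of Theorem~\ref{thm:main}. Via Lazard, $\beta=\exp\mu\in\Aut(H)$, and $\beta$ is trivial on $H/\gamma_2(H)=H/\Phi(H)$ because $H$ has exponent $p$. Hence $\beta\in\UAut(KH)$, and by Theorem~\ref{thm:dict}(2) it acts on $C(V)$ by $\exp(\Om(\mu))$. This shows $U(H)=\exp(\Om(\Der_{>0}(\fl)))$ as sets.

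\emph{Counting.} The main work is to show that this set has exactly $p^{\dim\mathfrak u}$ elements, where $\mathfrak u:=\Om(\Der_{>0}(\fl))$.

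First, $\Der_{>0}(\fl)$ is a Lie subalgebra of $\Der(\fl)$ consisting of derivations that raise the lower central filtration. By Lemma~\ref{lem:Omega}(2), $\Om$ is a Lie homomorphism, so $\mathfrak u$ is a Lie subalgebra of $\End(\fv)$. Its elements strictly raise the degree filtration of $\fv$: a lift $\tilde\mu$ maps $\gamma_n(\fe)$ into $\gamma_{n+1}(\fe)$, since it can be chosen with generators sent into $\gamma_2(\ff)$. The degree filtration has length $\le c$ (from degree $2$ to $c+1$), so $\mathfrak u$ is a Lie algebra of nilpotent operators with products of $c$ of them vanishing, and $c\le p-2<p$.

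On such an algebra, the BCH series truncated below degree $p$ makes $\exp$ injective; $\log$ is a truncated inverse. Moreover $\exp(\mathfrak u)$ is closed under products and inverses, because $\exp(x)\exp(y)=\exp(\mathrm{BCH}(x,y))$ with $\mathrm{BCH}(x,y)\in\mathfrak u$ and $\exp(x)^{-1}=\exp(-x)$. This confirms that the set equals the group $U(H)$ and has $p^{\dim\mathfrak u}$ elements.

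The step I expect to be the main obstacle is justifying the truncated BCH identity in $\End(\fv)$. The relevant products have length at most $c<p$, so all denominators are prime to $p$; I would argue this exactly as in Lemma~\ref{lem:log}, working in $\Z_{(p)}\langle\langle x,y\rangle\rangle$ modulo degree $p-1$.

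\emph{Final claim.} Since $\exp$ is injective on $\mathfrak u$ with $\exp 0=1$, we get $U(H)=1$ if and only if $\mathfrak u=0$.
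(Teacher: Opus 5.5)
Your proposal is correct and follows essentially the same route as the paper. Both inclusions come from Theorem~\ref{thm:main} and Theorem~\ref{thm:dict}(2), combined with the injectivity of $U(H)\to GL(C(V))$ from Proposition~\ref{prop:absorb}, and the count comes from $\exp$ being injective, with inverse $\log$, on the space $\fu$ of operators satisfying $X^c=0$. The BCH closure step you flag as the main obstacle is not needed: $U(H)$ is already a group as the image of $\UAut(KH)$, so only the injectivity of $\exp$ on $\fu$ matters.
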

\begin{proof}
For $\mu\in\Der_{>0}(\fl)$ we saw that $\exp(\mu)\in\Aut(H)\cap\UAut(KH)$,
and $\rho(\exp\mu)|_{C(V)}=\exp(\Om(\mu))$ by Theorem~\ref{thm:dict}(2);
conversely every $\rho(\alpha)$ has this form by Theorem~\ref{thm:main}.
With Proposition~\ref{prop:absorb} this gives $U(H)=\exp(\fu)$ for
$\fu:=\Om(\Der_{>0}(\fl))$. Now $\fu$ is a Lie subalgebra of $\End(\fv)$
(Lemma~\ref{lem:Omega}(2)) all of whose elements raise the degree filtration
of $\fv$, which has length $\le c$; so $X^c=0$ for $X\in\fu$, and as
$c\le p-2$ the map $\exp$ is injective on $\fu$, with inverse $\log$. Hence
$|U(H)|=|\fu|$.
\end{proof}

\begin{corollary}[graded Lazard algebras]\label{cor:carnot}
Let $p$ be odd and $H$ of exponent $p$ and class $c\le p-2$, and suppose
$\fl(H)=\bigoplus_{i\ge1}\fl_i$ is graded, with $[\fl_i,\fl_j]\subseteq
\fl_{i+j}$ and generated by $\fl_1$. Then the Levi factorization holds, and
central Frattini extensions of $H$ whose small group rings are isomorphic as
pairs are isomorphic groups.
\end{corollary}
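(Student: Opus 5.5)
We need two things: the Levi factorization, and then Lemma~\ref{lem:reform}(3). For the latter, Theorem~\ref{thm:main} already shows that every element of $U(H)$ equals $\rho(\beta)$ for some $\beta=\exp(\mu)\in\Aut(H)\cap\UAut(KH)$. So the class of $\beta$ in $\Out(KH)$ lies in the image of $\Aut(H)$, and the second hypothesis of Lemma~\ref{lem:reform}(3) holds unconditionally. The plan therefore reduces to proving that, when $\fl(H)$ is graded, the images of $\Aut(H)$ and $\Aut(KH)$ in $GL(H/\Phi(H))$ coincide. Once that is done, Lemma~\ref{lem:reform}(3) together with Theorem~\ref{thm:HS} gives the isomorphism statement.

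For the Levi factorization I would take $\alpha\in\Aut(KH)$ and let $g\in GL(\I/\I^2)=GL(L_1(H))$ be its induced map. By Lemma~\ref{lem:grrigid}, $\gr\alpha$ restricts to an automorphism of the graded Lie algebra $L(H)=\bigoplus_n L_n(H)$, which extends $g$ and preserves brackets. Since $H$ has exponent $p$ and class $c\le p-2$, we have $D_n(H)=\gamma_n(H)$, and $L(H)=\gr\fl(H)$ as noted at the start of \S\ref{sec:expp}. The gradedness hypothesis gives an isomorphism of Lie algebras $\fl(H)\cong\gr\fl(H)$: the grading $\fl=\bigoplus\fl_i$, generated by $\fl_1$, satisfies $\gamma_n(\fl)=\bigoplus_{i\ge n}\fl_i$, so $\fl_n\to\gamma_n/\gamma_{n+1}$ is an isomorphism compatible with brackets. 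I would also check that the Lie structure on $L(H)$ coming from group commutators agrees with the associated graded of the Lazard bracket. This holds because BCH and inverse BCH agree with the bracket to leading order, and it is implicit in the identification $L(H)=\gr\fl(H)$.

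Transporting $\gr\alpha|_{L(H)}$ through this isomorphism yields $\theta\in\Aut(\fl(H))$ inducing $g$ on $\fl/\gamma_2(\fl)=H/\Phi(H)$. The Lazard correspondence then gives $\beta\in\Aut(H)$ with the same action on $H/\Phi(H)$. So $\beta^{-1}\alpha\in\UAut(KH)$, and hence $\Aut(KH)=\Aut(H)\cdot\UAut(KH)$.

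The only delicate point is the compatibility between the Lazard algebra's associated graded and the Jennings–Quillen Lie algebra $L(H)$ inside $\gr(KH)$. I expect it to follow from the embedding $\fl_B=\log H\subseteq B=KH/\I^p$ of \S4.2: there $\log h\equiv h-1 \pmod{\I^{n+1}}$ for $h\in\gamma_n(H)$, and $\fl_B\cap\I^n=\gamma_n(\fl_B)$. Hence $\gr\fl_B$ maps isomorphically onto $L(H)\subseteq\gr(KH)$ in degrees $<p$, with associative commutators matching Lie brackets. Degrees $\ge p$ are irrelevant since $c\le p-2$.
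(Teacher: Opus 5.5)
Your proposal is correct and follows the paper's proof: graded rigidity (Lemma~\ref{lem:grrigid}) together with $L(H)=\gr\fl(H)\cong\fl(H)$ produces a group automorphism with the same action on $\I/\I^2$, which gives the Levi factorization, and then Theorem~\ref{thm:main} and Lemma~\ref{lem:reform}(3) finish the argument. Your extra check, that the Jennings bracket on $L(H)$ agrees with $\gr$ of the Lazard bracket via $\log h\equiv h-1$ modulo $\I^{n+1}$ in $KH/\I^{\,p}$, makes explicit an identification the paper simply asserts at the start of Section~\ref{sec:expp}.
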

\begin{proof}
Then $\gamma_k(\fl)=\bigoplus_{i\ge k}\fl_i$ and $\gr\fl\cong\fl$ as graded
Lie algebras, so $L(H)=\gr\fl\cong\fl$. Given $\theta\in\Aut(KH)$, the map
$\gr\theta$ restricts to a graded Lie automorphism of $L(H)$
(Lemma~\ref{lem:grrigid}), hence to one of $\fl$, hence to some
$\beta\in\Aut(H)$ acting on $\fl_1=L_1=\I/\I^2$ as $\theta$ does; thus
$\beta^{-1}\theta\in\UAut(KH)$ and the Levi factorization holds. Now apply
Theorem~\ref{thm:main} and Lemma~\ref{lem:reform}(3).
\end{proof}

\begin{proof}[Proof of Theorem~\ref{thmA}]
(1) is Theorem~\ref{thm:main}; (2) is Theorem~\ref{thm:dict}(1),
Proposition~\ref{prop:absorb} and Theorem~\ref{thm:formula}; (3) is the last
clause of Theorem~\ref{thm:formula}; (4) is Corollary~\ref{cor:carnot}.
\end{proof}

\begin{remark}[the two hypotheses]\label{rem:hyp}
The bound $c\le p-2$ enters only through Proposition~\ref{prop:E}: the
universal extension must have class $<p$ for the Lazard correspondence to
apply to it. The hypothesis of exponent $p$ enters twice, in
Proposition~\ref{prop:absorb} and in the identification of $\fe$ with a
Lie algebra over $\Fp$. For exponent $p^2$ the logarithm of
Lemma~\ref{lem:log} still exists, but $KH/\I^{\,p}=K[H/D_p(H)]/\I^{\,p}$ sees
only the exponent-$p$ quotient, and what remains is the question of which
unipotent automorphisms of that quotient lift to $H$.
\end{remark}

\section{Examples}\label{sec:examples}

Throughout this section $p\ge5$, and $H$ has exponent $p$ and class
$c\le p-2$; we write $\fl=\fl(H)$ and $\fv=H_2(\fl)=\fr/[\ff,\fr]$. If $\fl$
is graded, every $\mu\in\Der_{>0}(\fl)$ is a sum of graded derivations
$\mu_j$ of degrees $j\ge1$, and $\Om(\mu)=\sum_j\Om(\mu_j)$ with
$\Om(\mu_j)$ raising the degree of $\fv$ by $j$.

\subsection{One layer}

\begin{corollary}\label{cor:onelayer}
If $\fv$ is concentrated in a single degree, then $U(H)=1$.
\end{corollary}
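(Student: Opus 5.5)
The plan is to show that $\Om(\Der_{>0}(\fl))=0$ under the hypothesis, and then conclude from the last clause of Theorem~\ref{thm:formula}. I read ``$\fv$ concentrated in a single degree'' as follows: the degree filtration $\fv\cap\gamma_n(\fe)$ has exactly one jump. That is, for some $k$ one has $\fv\subseteq\gamma_k(\fe)$ and $\fv\cap\gamma_{k+1}(\fe)=0$. In the graded case this is the same as saying that $\fv$ lives in one homogeneous degree.

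\textbf{Step 1: choose a filtration-raising lift.} Let $\mu\in\Der_{>0}(\fl)$. I will lift $\mu$ to a derivation $\tilde\mu$ of $\ff$ that raises the lower central filtration. For each free generator $x_i$, we have $\mu(\pi x_i)\in\gamma_2(\fl)=\pi(\gamma_2(\ff))$, so I can choose $\tilde\mu(x_i)\in\gamma_2(\ff)$. The derivation of $\ff$ determined by these values then maps $\gamma_n(\ff)$ into $\gamma_{n+1}(\ff)$; this follows by induction on $n$ from the Leibniz rule, since $\gamma_n(\ff)$ is spanned by left-normed brackets of length $n$ in the generators. By the discussion preceding Lemma~\ref{lem:Omega}, $\tilde\mu$ preserves $\fr$ and $[\ff,\fr]$. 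It therefore induces a derivation of $\fe$ mapping $\gamma_n(\fe)$ into $\gamma_{n+1}(\fe)$, and by Lemma~\ref{lem:Omega}(1),(2) its restriction to $\fv$ is $\Om(\mu)$.

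\textbf{Step 2: conclude.} From Step 1, $\Om(\mu)(\fv)\subseteq\Om(\mu)(\gamma_k(\fe))\cap\fv\subseteq\gamma_{k+1}(\fe)\cap\fv=0$. Hence $\Om$ vanishes on $\Der_{>0}(\fl)$. Theorem~\ref{thm:formula} then gives $U(H)=\exp(0)=1$, or equivalently $|U(H)|=p^0$.

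The only point needing care is Step 1, namely that the lift can be taken filtration-raising. The key observation is that $\fr\subseteq\gamma_2(\ff)$, so the preimage of $\gamma_2(\fl)$ in $\ff$ is exactly $\gamma_2(\ff)$. Lemma~\ref{lem:Omega}(1) guarantees that this choice of lift does not affect $\Om(\mu)$. No grading hypothesis on $\fl$ is needed.
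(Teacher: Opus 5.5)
Your proof is correct and takes essentially the same route as the paper, whose one-line argument is that $\Om(\mu)$ raises the degree filtration of $\fv$, which has only one layer, so $\Om(\Der_{>0}(\fl))=0$ and Theorem~\ref{thm:formula} applies. Your Step~1 usefully makes explicit the filtration-raising lift of $\mu$, which the paper only asserts (in the proof of Theorem~\ref{thm:formula}) without justification.
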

\begin{proof}
$\Om(\mu)$ raises the degree filtration of $\fv$, whose target is zero.
\end{proof}

\begin{example}[relatively free groups]\label{ex:ladder}
Let $d\ge2$, $2\le c\le p-2$ and $H=F_d/\gamma_{c+1}(F_d)F_d^{\,p}$. Then
$\fl$ is the free nilpotent Lie algebra of class $c$ on $d$ generators,
$\fr=\ff_{c+1}=\ell_{c+1}(d)$ and $[\ff,\fr]=0$, so $\fv=\ell_{c+1}$ is
concentrated in degree $c+1$. Hence $U(H)=1$ and
$\dim V=\ell_{c+1}(d)+d$; and $\fl$ is graded, so by
Corollary~\ref{cor:carnot} non-isomorphic central Frattini extensions of $H$
have non-isomorphic small group rings. As $c$ and $d$ vary, these extensions
realize every nilpotency class up to $p-1$, every rank, and exponent
dividing $p^2$. The case $d=c=2$ is the extraspecial group $p^{1+2}_+$ of
order $p^3$ and exponent $p$, whose central Frattini extensions have all
orders $p^4,\dots,p^7$; it is the exact odd counterpart of $H=D_8$ at $p=2$
\cite[Ex.~6.3]{HS07}, where instead $U(H)$ is as large as the flag of $V$
allows.
\end{example}

\subsection{Two generators}\label{subsec:twogen}

\begin{proposition}\label{prop:twogen}
Let $d=2$, $3\le c\le p-2$ and $N\le\gamma_c(F_2)$ with image
$\bar N\subseteq\ell_c(2)$, and put $H=F_2/\gamma_{c+1}(F_2)F_2^{\,p}N$.
Then $U(H)=1$, the Levi factorization holds, and non-isomorphic central
Frattini extensions of $H$ have non-isomorphic small group rings.
\end{proposition}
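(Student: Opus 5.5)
The plan is to apply Theorem~\ref{thm:formula} and show that $\Om(\Der_{>0}(\fl))=0$ by a degree count. The Levi factorization and the statement about small group rings will then come from Corollary~\ref{cor:carnot}.

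\emph{Step 1: $\fl$ is graded.} Under the Lazard correspondence, $H$ corresponds to $\fl=\ell^{(c)}/\bar N$, where $\ell^{(c)}=\bigoplus_{k\le c}\ell_k(2)$ is the free nilpotent Lie algebra of class $c$. The subspace $\bar N\subseteq\ell_c(2)$ is homogeneous and central, so it is a graded ideal. Hence $\fl$ is graded and generated in degree one, and Corollary~\ref{cor:carnot} already gives the Levi factorization. Once $U(H)=1$ is known, that corollary also gives the last assertion.

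If $\bar N=\ell_c(2)$, then $H=F_2/\gamma_c(F_2)F_2^{\,p}$ has class $c-1\ge2$, and Example~\ref{ex:ladder} settles this case. So assume $\bar N\ne\ell_c(2)$, so that $\fl$ has class exactly $c$.

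\emph{Step 2: compute $\fv$ with its grading.} Take $\ff=\bigoplus_{k\le c+1}\ell_k(2)$. The kernel of $\ff\to\fl$ is $\fr=\bar N\oplus\ell_{c+1}(2)$. Since $\ell_{c+1}$ is central in $\ff$, we get $[\ff,\fr]=[\ell_1,\bar N]\subseteq\ell_{c+1}$. Therefore
\[
\fv=\bar N\oplus\ell_{c+1}(2)/[\ell_1,\bar N],
\]
which is concentrated in degrees $c$ and $c+1$ only.

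\emph{Step 3: reduce to degree-one derivations.} Let $\mu\in\Der_{>0}(\fl)$. Decompose $\mu=\sum_{j\ge1}\mu_j$ into homogeneous components; each $\mu_j$ is again a derivation because $\fl$ is graded, and $\Om(\mu)=\sum_j\Om(\mu_j)$. For $j\ge2$, $\Om(\mu_j)$ raises degree by $j\ge2$, so it maps $\fv$ into degrees $\ge c+2$, where $\fv$ is zero. It therefore remains to treat $\mu_1$.

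Let $x,y$ be the generators. A degree-one derivation is determined by $\mu_1(x)=a[x,y]$ and $\mu_1(y)=b[x,y]$ with $a,b\in\Fp$; here we use $c\ge3$, so that $\ell_2$ survives in $\fl$ and its image in degree $2$ is one-dimensional. Put $z=bx-ay$. Then $[z,x]=-a[y,x]=a[x,y]$ and $[z,y]=b[x,y]$. So $\ad_z$ agrees with $\mu_1$ on the generators, and since both are derivations, $\mu_1=\ad_z$. By Lemma~\ref{lem:Omega}(3), $\Om(\mu_1)=0$.

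Hence $\Om(\Der_{>0}(\fl))=0$, and Theorem~\ref{thm:formula} gives $U(H)=1$. The main point is Step 2: one must confirm that $\fv$ has no component beyond degree $c+1$, i.e.\ that $[\ff,\fr]$ only affects the top degree. Step 3 is essentially the one-line Jacobi/inner-derivation observation.
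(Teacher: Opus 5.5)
Your proof is correct and follows essentially the paper's argument. You compute $\fv=\bar N\oplus\ell_{c+1}/[\ell_1,\bar N]$ in degrees $c$ and $c+1$, discard graded components of degree $\ge2$, show that degree-one derivations are inner, and conclude with Theorem~\ref{thm:formula} and Corollary~\ref{cor:carnot}. The only differences are that you exhibit $z=bx-ay$ explicitly where the paper compares the dimensions of $\Der_1(\fl)$ and of the inner derivations from $\ell_1$, and that your separate treatment of $\bar N=\ell_c(2)$ is harmless but unnecessary, since the same computation covers it.
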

\begin{proof}
Here $\fl=\ff/\fr$ with $\fr=\bar N+\ff_{c+1}$, so $[\ff,\fr]=[\ell_1,
\bar N]$ and $\fv=\bar N\oplus\ell_{c+1}/[\ell_1,\bar N]$ lives in degrees
$c$ and $c+1$; and $\fl$ is graded. Only graded derivations of degree $1$ can
act nontrivially on $\fv$, and since $c\ge3$ gives $\fl_2=\ell_2$, such a
derivation is an arbitrary linear map $\ell_1\to\ell_2$, which extends to
$\ff$ and preserves $\fr$: thus $\Der_1(\fl)\cong\Hom(\ell_1,\ell_2)$, of
dimension $2$ because $\dim\ell_2(2)=1$. The inner derivations $\ad_x$ with
$x\in\ell_1$ already span a $2$-dimensional subspace of it, since
$[x,\ell_1]=0$ forces $x=0$, and they lie in $\ker\Om$
(Lemma~\ref{lem:Omega}(3)). Hence $\Om(\Der_{>0}(\fl))=0$, and
Theorem~\ref{thm:formula} with Corollary~\ref{cor:carnot} concludes.
\end{proof}

\begin{example}[maximal class of order $p^4$]\label{ex:p47}
Let $H=\langle a,b\mid a^p,\,b^p,\,[b,a,b],\,[b,a,a,a]\rangle$, the group of
maximal class, order $p^4$ and exponent $p$. Writing $w=[y,x]$ in
$\ell(x,y)$, this is the case $c=3$, $\bar N=\langle[w,y]\rangle$ of
Proposition~\ref{prop:twogen}: here $\fv_3=\langle[w,y]\rangle$ and
$\fv_4=\ell_4/\langle[[w,y],x],[[w,y],y]\rangle$ are both one-dimensional,
and for $\mu(x)=\lambda w$, $\mu(y)=\nu w$ one computes
\[
\Om(\mu)\bigl([w,y]\bigr)=\nu\,[[w,x],y]-\lambda\,[[w,y],y]
\pmod{[\ell_1,\bar N]} .
\]
The second term lies in $[\ell_1,\bar N]$, and so does the first, by Jacobi:
$[[w,x],y]=[[w,y],x]+[w,[x,y]]=[[w,y],x]$. Thus $U(H)=1$. Here the
commutator sector of $V$ occupies two layers of the depth flag, and a
single application of the Jacobi identity replaces the computation in
$u(L(E))$ that a direct approach through the flag requires.
\end{example}

\subsection{Class two}\label{subsec:class2}
Let $H$ have class $2$ and exponent $p$, with $L_1=H/\Phi(H)$,
$L_2=H'=\gamma_2(H)$ and commutator map $\omega:\Lambda^2L_1\to L_2$, whose
kernel we denote by $K$. Then $\fl=L_1\oplus L_2$ with bracket $\omega$, a
graded algebra; $\ff$ is free nilpotent of class $3$ on $L_1$, so
$\ff_2=\Lambda^2L_1$, $\fr=K\oplus\ff_3$, $[\ff,\fr]=[L_1,K]$, and
\[
\fv=K\ \oplus\ \ell_3(L_1)/[L_1,K]\qquad\text{in degrees $2$ and $3$.}
\]
Every linear map $\chi:L_1\to L_2$ is a derivation of $\fl$ (extend by zero
on $L_2$; both sides of Leibniz vanish, $L_2$ being central and containing
$[\fl,\fl]$), so $\Der_1(\fl)=\Hom(L_1,L_2)$; and $\exp(\chi)$ is the
central automorphism $g\mapsto g\chi(g)$ of $H$. Choosing a lift
$\tilde\chi:L_1\to\Lambda^2L_1$ of $\chi$,
\begin{equation}\label{eq:Omclass2}
\Om(\chi)\Bigl(\sum a_{ij}\,x_i\wedge x_j\Bigr)=
\sum a_{ij}\bigl([\tilde\chi x_i,x_j]+[x_i,\tilde\chi x_j]\bigr)
\pmod{[L_1,K]},\qquad \sum a_{ij}\,x_i\wedge x_j\in K .
\end{equation}

\begin{theorem}\label{thm:class2}
Let $p\ge5$ and let $H$ have class $2$ and exponent $p$. Then
$U(H)\cong\Hom(L_1,L_2)/\ker\Om$ as an elementary abelian group, every
element of $U(H)$ is realized by a central automorphism of $H$, the Levi
factorization holds, and non-isomorphic central Frattini extensions of $H$
have non-isomorphic small group rings. Moreover
$\ker\Om\supseteq\{\omega(x,\cdot):x\in L_1\}$, so that
\[
\log_p|U(H)|\ \le\ \dim L_1\cdot\dim L_2-\dim L_1
+\dim\operatorname{rad}\omega .
\]
\end{theorem}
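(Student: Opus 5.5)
The plan is to specialize Theorem~\ref{thm:formula} and Corollary~\ref{cor:carnot} to the graded algebra $\fl=L_1\oplus L_2$. Since $H$ has class $2\le p-2$ (as $p\ge5$), both results apply.

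\textbf{Step 1: reduce to degree-one derivations.} Because $\fl$ is graded and generated by $L_1$, Corollary~\ref{cor:carnot} immediately gives the Levi factorization and the final statement about small group rings. For the formula, write $\mu\in\Der_{>0}(\fl)$ as a sum of graded pieces. A degree-one piece maps $L_1\to L_2$ and $L_2\to0$. A piece of degree $\ge2$ vanishes, since $\fl$ lives in degrees $1,2$. Hence $\Der_{>0}(\fl)=\Der_1(\fl)=\Hom(L_1,L_2)$, as noted before \eqref{eq:Omclass2}.

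\textbf{Step 2: identify $U(H)$ as a group.} By Lemma~\ref{lem:Omega}(2), $\Om$ restricted to $\Hom(L_1,L_2)$ is a Lie homomorphism. The bracket of two such derivations is zero: each kills $L_2$ and maps into $L_2$, so both composites vanish. Hence $\fu:=\Om(\Hom(L_1,L_2))$ is an abelian Lie algebra of operators. Each $X\in\fu$ raises degree $2\to3$ in $\fv$, so $X^2=0$ and also $XY=0$ for $X,Y\in\fu$. It follows that
\[
\exp(X)=1+X,\qquad \exp(X)\exp(Y)=1+X+Y=\exp(X+Y).
\]
So $\exp:\fu\to U(H)$ is a group isomorphism from the additive group $\fu\cong\Hom(L_1,L_2)/\ker\Om$. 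This makes $U(H)$ elementary abelian and, with Theorem~\ref{thm:formula}, gives $U(H)\cong\Hom(L_1,L_2)/\ker\Om$.

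\textbf{Step 3: realization by central automorphisms.} Each element of $U(H)$ is $\rho(\exp\chi)$ for some $\chi\in\Hom(L_1,L_2)$. Here $\exp\chi=1+\chi$ corresponds to the central automorphism $g\mapsto g\chi(g)$. The only point to check is that Lazard's identification turns $1+\chi$ on $\fl$ into this map on $H$. This holds because $\chi(g)$ is central, so BCH addition with a central element is multiplication.

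\textbf{Step 4: the kernel bound, the main point.} Take $x\in L_1$ and set $\chi_x:=\omega(x,\cdot)$. This is the restriction to $L_1$ of the inner derivation $\ad_x$, which also kills $L_2$; so $\chi_x=\ad_x$ as derivations of $\fl$. By Lemma~\ref{lem:Omega}(3), $\chi_x\in\ker\Om$. The map $x\mapsto\chi_x$ has kernel $\operatorname{rad}\omega$, so $\dim\ker\Om\ge\dim L_1-\dim\operatorname{rad}\omega$. The inequality then follows from
\[
\dim\Hom(L_1,L_2)=\dim L_1\cdot\dim L_2.
\]
I expect no real obstacle here. The step needing most care is Step 3's Lazard dictionary check, together with confirming that the truncated $\exp$ in Theorem~\ref{thm:main} agrees with $1+\chi$.
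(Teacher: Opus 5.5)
Your proposal is correct and follows the paper's proof essentially step for step. Theorem~\ref{thm:formula} with $\Der_{>0}(\fl)=\Hom(L_1,L_2)$ gives $U(H)$, the two layers of $\fv$ force $\exp\Om(\chi)=1+\Om(\chi)$, Corollary~\ref{cor:carnot} supplies the Levi factorization and the small-group-ring conclusion, and $\ad_x$ restricting to $\omega(x,\cdot)$ gives the kernel bound; your check that $XY=0$ for $X,Y\in\fu$, which makes $\exp$ additive, spells out a point the paper leaves implicit.
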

\begin{proof}
By Theorem~\ref{thm:formula}, $U(H)=\exp\Om(\Hom(L_1,L_2))$; and
$\Om(\chi)^2=0$ because $\fv$ has two layers, so $\exp\Om(\chi)=1+\Om(\chi)$
and $U(H)$ is the additive group $\Om(\Hom(L_1,L_2))$, realized by the
central automorphisms $\exp(\chi)$. The Levi factorization and the last
assertion are Corollary~\ref{cor:carnot}. Finally the inner derivation
$\ad_x$ restricts to $\omega(x,\cdot)$ on $L_1$ and lies in $\ker\Om$, and
$x\mapsto\omega(x,\cdot)$ has kernel $\operatorname{rad}\omega$.
\end{proof}

\begin{example}[$p^{1+2}_+\times C_p$]\label{ex:extraCp}
Take $L_1=\langle a,b,t\rangle$, $L_2=\langle z\rangle$ with
$\omega(a,b)=z$ and $\omega(t,\cdot)=0$, so that
$K=\langle t\wedge a,\,t\wedge b\rangle$. Then $[L_1,K]$ is the part of
$\ell_3(a,b,t)$ of positive $t$-degree --- it contains $[[a,b],t]$ by Jacobi
--- so $\fv_3=\ell_3(a,b)$ has dimension $2$ and $\dim V=2+2+3=7$. For
$\chi$ with $\chi(t)=\gamma z$, formula \eqref{eq:Omclass2} gives
$\Om(\chi)(t\wedge a)=\gamma[[a,b],a]$ and
$\Om(\chi)(t\wedge b)=\gamma[[a,b],b]$, the contributions of $\chi(a)$ and
$\chi(b)$ having positive $t$-degree. Hence $U(H)\cong C_p$, generated by the
central automorphism $t\mapsto t[a,b]$, and the bound of
Theorem~\ref{thm:class2}, namely $3-3+1=1$, is attained.
\end{example}

\begin{proposition}[Heisenberg groups and direct powers]\label{prop:hd}
Let $p\ge5$.
\begin{enumerate}[label=\textup{(\arabic*)}]
\item Let $H=(p^{1+2}_+)^M$ and write $L_1=\bigoplus_{i=1}^MU_i$ with
$\dim U_i=2$, and $L_2=\bigoplus_iZ_i$ with $Z_i=\omega(\Lambda^2U_i)$. Then
$\fv_2=K=\bigoplus_{i<j}U_i\otimes U_j$, the space
$\fv_3=\bigoplus_i\ell_3(U_i)$ has dimension $2M$, the kernel
$\ker\Om=\bigoplus_i\Hom(U_i,Z_i)$ is exactly the space of inner
derivations, and $|U(H)|=p^{2M(M-1)}$.
\item For $H=\Heis(p^n)$, the Sylow $p$-subgroup of $SL_3(p^n)$,
$|U(H)|=p^{2n(n-1)}$.
\end{enumerate}
\end{proposition}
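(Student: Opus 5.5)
The plan is to reduce both parts to Theorem~\ref{thm:class2}, which gives $U(H)\cong\Hom(L_1,L_2)/\ker\Om$, and so reduces everything to computing $\ker\Om$ via formula \eqref{eq:Omclass2}. The expected answer in both cases is that $\ker\Om$ is exactly the space of inner derivations. Since $\omega$ is nondegenerate in both cases ($\operatorname{rad}\omega=0$), the inner derivations $\omega(x,\cdot)$ form a subspace of dimension $\dim L_1$, which lies in $\ker\Om$ by Lemma~\ref{lem:Omega}(3). So it suffices to prove the reverse inclusion and then count dimensions.

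For (1), I first decompose
\[
\Lambda^2L_1=\bigoplus_i\Lambda^2U_i\oplus\bigoplus_{i<j}U_i\otimes U_j .
\]
Here $\omega$ kills the cross terms and maps each $\Lambda^2U_i$ isomorphically onto $Z_i$, so $K=\bigoplus_{i<j}U_i\otimes U_j$. Next I split $\ell_3(L_1)$ by multidegree in the $U_i$.
\begin{itemize}
\item Every component involving at least two indices lies in $[L_1,K]$. For multidegree $(2\text{ in }i,\,1\text{ in }j)$ this follows from Jacobi: $[[u,u'],w]=[[u,w],u']-[[u',w],u]$ with $[u,w],[u',w]\in K$.
\item The single-index components $\ell_3(U_i)$ meet $[L_1,K]$ trivially, since $[L_1,K]$ is spanned by multidegree-homogeneous elements with at least two indices.
\end{itemize}
Hence $\fv_3=\bigoplus_i\ell_3(U_i)$, of dimension $2M$.

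Now take $\chi\in\Hom(L_1,L_2)$ and write $\chi_k$ for its $Z_k$-component. Lift $\chi_k$ into $\Lambda^2U_k$. For $u\in U_i$, $w\in U_j$ with $i\neq j$, formula \eqref{eq:Omclass2} and the multidegree bookkeeping give
\[
\Om(\chi)(u\otimes w)=[\tilde\chi_j u,w]+[u,\tilde\chi_i w]\in\ell_3(U_j)\oplus\ell_3(U_i).
\]
All other terms have mixed multidegree and vanish modulo $[L_1,K]$. The bracket map $\Lambda^2U_j\otimes U_j\to\ell_3(U_j)$ is an isomorphism (both sides have dimension $2$). Therefore $\Om(\chi)=0$ if and only if $\chi_j|_{U_i}=0$ for all $i\ne j$, that is, $\chi(U_i)\subseteq Z_i$ for every $i$. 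This gives $\ker\Om=\bigoplus_i\Hom(U_i,Z_i)$, of dimension $2M$, which is the space of inner derivations. Consequently
\[
\log_p|U(H)|=2M\cdot M-2M=2M(M-1).
\]

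For (2), view $\fl(\Heis(p^n))$ over $\Fp$: take $L_1=\mathbb F_{q}^2$ and $L_2=\mathbb F_q$ with $q=p^n$, and $\omega(x,y)=x_1y_2-x_2y_1$. The key step, and the main obstacle, is extension of scalars. Everything entering $\Om$ is functorial linear algebra: the presentation, $K$, $[L_1,K]$, $\fv$, and the map $\chi\mapsto\Om(\chi)$. Hence for any field extension $F/\Fp$ we have
\[
\dim_{\Fp}\Om(\Hom(L_1,L_2))=\dim_F\Om\bigl(\Hom(L_1,L_2)\otimes F\bigr),
\]
and the computation in (1) goes through verbatim over any field. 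I then use that $\mathbb F_q\otimes_{\Fp}\mathbb F_q\cong\mathbb F_q^n$ via the Galois conjugates. Under this isomorphism $\fl\otimes\mathbb F_q$ becomes the direct sum of $n$ copies of the $3$-dimensional Heisenberg algebra over $\mathbb F_q$, which is exactly the situation of (1) with $M=n$ over $\mathbb F_q$. Therefore $|U(H)|=p^{2n(n-1)}$. As a check, $\dim\Hom(L_1,L_2)=2n\cdot n$ and the inner derivations have dimension $2n$, consistent with this count.
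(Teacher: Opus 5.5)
Your proposal is correct and follows the paper's proof essentially step by step. The shared steps are:
\begin{itemize}
\item the decomposition of $\Lambda^2L_1$ giving $K$;
\item the multidegree argument, with Jacobi for the $(2,1)$ components, giving $\fv_3=\bigoplus_i\ell_3(U_i)$;
\item the pure-term computation of $\Om(\chi)$ on $U_i\otimes U_j$ together with the isomorphism $\Lambda^2U_j\otimes U_j\to\ell_3(U_j)$;
\item the base change $\mathbb F_q\otimes_{\Fp}\mathbb F_q\cong\mathbb F_q^{\,n}$ for (2).
\end{itemize}
The only difference is cosmetic. You identify $\ker\Om$ with the inner derivations by a dimension count using $\operatorname{rad}\omega=0$, whereas the paper checks directly that the maps $\bigoplus_i\omega(x_i,\cdot)$ fill $\bigoplus_i\Hom(U_i,Z_i)$.
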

\begin{proof}
(1) Since $\Lambda^2L_1=\bigoplus_i\Lambda^2U_i\oplus\bigoplus_{i<j}
U_i\otimes U_j$ and $\omega$ is an isomorphism on the first part and zero on
the second, $K$ is as stated. Grade $\ell_3(L_1)$ by block multidegree in
$\mathbb N^M$. The pure components are the $\ell_3(U_i)$, of dimension
$\ell_3(2)=2$ each, and every mixed component lies in $[L_1,K]$: for
$u\in U_i$ and $w\in U_j$ with $i\ne j$ one has $[u,w]\in K$, which handles
the brackets $[[u,w],y]$, while for $u,u'\in U_i$ and $w\in U_j$ the Jacobi
identity gives $[[u,u'],w]=[[u,w],u']-[[u',w],u]$. As $[L_1,K]$ is
multihomogeneous and contained in the mixed part, $\fv_3=\bigoplus_i
\ell_3(U_i)$. Now lift $\chi\in\Hom(L_1,L_2)$ to $\tilde\chi$ with values in
$\bigoplus_i\Lambda^2U_i$, a complement of $K$, and write $\chi_{ij}$ for
the component $U_i\to Z_j$. For $v\in U_i$ and $w\in U_j$ with $i\ne j$ only
the pure terms survive in \eqref{eq:Omclass2}:
\[
\Om(\chi)(v\wedge w)=[\tilde\chi_{ij}(v),w]+[v,\tilde\chi_{ji}(w)]
\in\ell_3(U_j)\oplus\ell_3(U_i).
\]
The map $\Lambda^2U_j\otimes U_j\to\ell_3(U_j)$, $\zeta\otimes w\mapsto
[\zeta,w]$, is an isomorphism, both sides having dimension $2$; hence
$\Om(\chi)=0$ if and only if $\chi_{ij}=0$ for all $i\ne j$. So
$\ker\Om=\bigoplus_i\Hom(U_i,Z_i)$ has dimension $2M$ inside
$\dim\Hom(L_1,L_2)=2M^2$, which gives $|U(H)|=p^{2M(M-1)}$ by
Theorem~\ref{thm:class2}; and the inner derivations $\omega(x,\cdot)$ with
$x=\sum_ix_i$ are precisely the maps $\bigoplus_i\omega(x_i,\cdot)$, which
fill $\bigoplus_i\Hom(U_i,Z_i)$ because each $\omega|_{U_i}$ is
nondegenerate.

(2) Let $F=\mathbb F_{p^n}$. For $p$ odd the group $\Heis(F)$ is the
exponential of the strictly upper triangular $3\times3$ matrices over $F$,
so its Lazard algebra is the $3$-dimensional Heisenberg algebra
$\mathfrak h_F$ over $F$, regarded over $\Fp$. Every ingredient of
Theorem~\ref{thm:formula} is linear algebra compatible with extension of
scalars --- $\fv\otimes F=H_2(\fl\otimes F)$,
$\Der(\fl\otimes F)=\Der(\fl)\otimes F$ --- so
$\dim_F\Om_F(\Der_{>0}(\fl\otimes F))=\dim_{\Fp}\Om(\Der_{>0}(\fl))$. Since
$F\otimes_{\Fp}F\cong F^n$ through the Frobenius twists,
$\fl\otimes_{\Fp}F\cong\mathfrak h_F^{\oplus n}$ as $F$-Lie algebras, and
the computation in (1), valid over any field of odd characteristic, gives
$2n(n-1)$.
\end{proof}

\begin{remark}
So the unipotent freedom surviving in $\Heis(p^n)$ and in
$(p^{1+2}_+)^M$ is accounted for entirely by central automorphisms, and its
size counts the ``off-diagonal'' ones; for $M=1$ it recovers
$U(p^{1+2}_+)=1$. Note that Theorem~\ref{thm:class2} assumes nothing about
$\omega$ beyond surjectivity, so it covers every semifield group
\cite{Ver87}, and in particular the families of \cite{LW12}.
\end{remark}

\section{Open problems}\label{sec:open}

\begin{problem}[Levi factorization]\label{prob:levi}
Does $\Aut(KH)=\Aut(H)\cdot\UAut(KH)$ hold for every $H$ of exponent $p$ and
class $\le p-2$? Equivalently, is the image of $\Aut(KH)$ in
$GL(H/\Phi(H))$ contained in that of $\Aut(\fl(H))$? By
Corollary~\ref{cor:carnot} this holds whenever $\fl(H)$ is graded, and it is
the only hypothesis remaining in the conclusion about small group rings in
exponent $p$.
\end{problem}

\begin{problem}[the ceiling]\label{prob:ceiling}
Extend Theorem~\ref{thm:main} to class $c=p-1$. For
$H=F_d/\gamma_p(F_d)F_d^{\,p}$ the commutator sector of $V$ collapses into
depth $p$ and $U(H)=1$ follows from the flag alone, so the residual
configuration is that in which $H_2(\fl)$ is spread over at least three
degrees with top degree $p$, where it becomes invisible in
$s/\I^{\,p}$; the smallest instance is $p=5$, $c=4$, $d\ge3$, the case
$d=2$ being excluded as in Proposition~\ref{prop:twogen}.
\end{problem}

\begin{problem}[the kernel of $\Om$]
For $H$ of class $2$ and exponent $p$, describe $\ker\Om$ in terms of
$\omega$. Proposition~\ref{prop:hd} shows that it consists exactly of the
inner derivations when $\omega$ is an orthogonal sum of hyperbolic planes
with independent value lines; for which $\omega$ is this the case, and for
which is $U(H)=1$?
\end{problem}

\section*{Acknowledgements}

The author completed his B.Sc.\ in Mathematics at the Instituto de
Matem\'atica e Estat\'istica, Universidade de S\~ao Paulo, in 2026. No
funding was received for this work, and the author declares no conflicts of
interest.

During the preparation of this work the author used the large language model
Claude (Anthropic). The mathematical content arises from the author's own
research programme: the results, the examples, and the computer explorations
that guided their discovery are the author's. The model drafted the prose of
the present version of the manuscript, including the introduction, from the
author's earlier drafts and research notes, and proposed the organisation of
the paper; it supplied the first drafts of the proofs of Lemma~\ref{lem:log}
and Lemma~\ref{lem:symm}, rewrote several other proofs more concisely, and
identified and corrected errors in the author's earlier drafts; and it wrote
the code with which the author checked, independently of the proofs given
here, the dimension formulas and vanishing statements of
Section~\ref{sec:examples}. The author has reviewed and edited all of this,
has verified the mathematical arguments and the bibliographic data
independently, and takes full responsibility for the entire content of the
publication, including any part in which the tool was involved. The tool is
not an author.

\end{document}